\newtheorem{theorem}{Theorem}[section]
\newtheorem{proposition}[theorem]{Proposition}
\newtheorem{lemma}[theorem]{Lemma}
\newtheorem{claim}[theorem]{Claim}
\newtheorem{corollary}[theorem]{Corollary}
\newtheorem{remark}[theorem]{Remark}
\begin{document}

\title[Geodesically Tracking Quasi-Geodesic Paths for Coxeter Groups]
{Geodesically Tracking Quasi-Geodesic Paths for Coxeter Groups}

\author[Michael Mihalik]{Michael Mihalik}
\address{Department of Mathematics\\
        Vanderbilt University\\
        Nashville, TN 32340}
\email{michael.l.mihalik@vanderbilt.edu}
\email{steven.tschantz@vanderbilt.edu}

\author[Steve Tschantz]{Steve Tschantz}

\date{\today}

\keywords{$CAT(0)$ group, Coxeter group, quasi-geodesic}

\begin{abstract}
If $\Lambda$ is the Cayley graph of a Gromov hyperbolic group, then it is a fundamental fact that quasi-geodesics in $\Lambda$ are tracked by geodesics. Let $(W,S)$ be a finitely generated Coxeter system and $\Lambda(W,S)$ the Cayley graph of $(W,S)$. For general Coxeter groups, not all quasi-geodesic rays in $\Lambda$ are tracked by geodesics. In this paper we classify the $\Lambda$-quasi-geodesic rays that are tracked by geodesics. As corollaries we show that if $W$ acts geometrically on a CAT(0) space $X$, then CAT(0) geodesics in $X$ are tracked by Cayley graph geodesics (where the Cayley graph is equivariantly placed in $X$) and for any $A\subset S$, the special subgroup $\langle A\rangle$ is quasi-convex in $X$. We also show that if $g$ is an element of infinite order for $(W,S)$ then the subgroup $\langle g\rangle$ is tracked by a Cayley geodesic in $\Lambda(W,S)$ (in analogy with the corresponding result for word hyperbolic groups).
\end{abstract}

\maketitle

\section{Introduction}

\medskip

Suppose $G$ is a group with finite generating set $A$, and $\Lambda(G,A)$ is the Cayley graph of $G$ with respect to $A$. If $G$ is word hyperbolic then any quasi-geodesic in $\Lambda$ is tracked by a geodesic (see \cite{Short}).  The corresponding result for CAT(0) groups is not true. Our main goal in this paper is to classify the quasi-geodesics in the Cayley graph of a finitely generated Coxeter system that are tracked by geodesics. We define a ``bracket number" for a Cayley path in terms of the wall crossings of the path and our main theorem is that a quasi-geodesic ray or line is tracked by a geodesic iff the bracket number of the ray (line) is bounded. Our principal corollary to this theorem states that if $(W,S)$ is a finitely generated Coxeter system, and $W$ acts geometrically on a CAT(0) space $X$, then the CAT(0) geodesics of $X$ are tracked by $(W,S)$-Cayley geodesics in $X$. If $X$ is the Davis complex for $(W,S)$ or even if $W$ acts as a reflection group on $X$, the proof of the corollary is straightforward. Unfortunately, the reflection group argument has no analogue when $W$ does not act as a reflection group on $X$. 
The principal corollary directly implies that if $A\subset S$ then the special subgroup $\langle A\rangle$ is quasi-convex in $X$. 

If a group $G$ acts geometrically on a CAT(0) space $X$ and one is interested in the asymptotic properties of $X$ it is a considerable advantage to know that CAT(0) geodesics in $X$ are tracked by Cayley geodesics. Clearly, the algebraic properties of $G$ are far more apparent in Cayley geodesics than in CAT(0) geodesics. This theme is highlighted in \cite{MRT} where local connectivity of boundaries of right angled Coxeter groups are analyzed. 

The work of B. Bowditch and G. Swarup (see \cite{Swarup}) imply that 1-ended word hyperbolic groups have locally connected boundary. One can easily see from our tracking results that any 1-ended hyperbolic Coxeter group has locally connected boundary. 

\section{Coxeter Preliminaries}

We use M. Davis' book \cite{Davis} as a general Coxeter group reference for this section. A Coxeter system is a pair $(W,S)$ where $S$ is a generating set for the group $W$ and $W$ has presentation 
$$\langle S: (s_is_j)^{m(i,j)}\hbox{ for all }s_i, s_j\in S\rangle$$  
where $m(i,j)\in \{1,2,\ldots , \infty\}$, $m(i,j)=1$ iff $i=j$ (so all generators are order 2) and $m(i,j)=m(j,i)$. If $m(i,j)=\infty$, the element $s_is_j$ is of infinite order (and the relation $(s_is_j)^\infty$ is left out of the presentation). 

A {\it reflection} in $W$ is a conjugate of an element of $S$. 
If $w\in W$ and $s\in S$ then the edge labeled $s$ in the Cayley graph $\Lambda(W,S)$ at the vertex $w$ is mapped to itself by the reflection $wsw^{-1}$, so that the vertices $w$ and $ws$ are interchanged. I.e. the edge is reflected across its midpoint. The set of edges in $\Lambda$ each fixed (set-wise) by a reflection is a {\it wall} of $\Lambda$. The walls of $\Lambda$ partition the edges of $\Lambda$ into disjoint sets. Notationally, we write a wall $Q$ as $[e]$ where $e$ is any edge of the wall $Q$ and we define $\bar Q$ to be the union of the edges of $Q$ in $\Lambda$. An edge $e$ (with say label $t\in S$) belongs to a wall $Q$ corresponding to the reflection $wsw^{-1}$ iff a vertex of $e$ is $wq$ where $qtq^{-1}=s$. The closure of the compliment of a wall in $\Lambda$ has exactly two components (which are interchanged by the reflection) called the {\it sides} of the wall. Two walls are {\it parallel} if all edges of one are on the same side of the other. If two walls are not parallel, then they {\it cross}. The following theorem due to B. Brink and R. Howlett (see theorem 2.8 of \cite{BrinkHowlett}) is a fundamental result concerning the wall structure of $\Lambda$.

\begin{theorem}\label{parallel} {\bf (Parallel Wall theorem)}
Suppose $(W,S)$ is a finitely generated Coxeter system and $\Lambda(W,S)$ the Cayley graph of $W$ with respect to $S$. For each positive integer $n$ there is a constant $P(n)$ such that the following holds: given a wall $Q$ and a point $p$ in $\Lambda$ such that the distance from $p$ to $\bar Q$ is at least $P(n)$, then there exist $n$ distinct pairwise parallel walls which separate $\bar Q$ from $p$. 
\end{theorem}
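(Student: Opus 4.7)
The plan is to use the Tits geometric representation of $W$ together with the dominance order on roots, following the strategy of Brink and Howlett. I would begin by setting up $V=\mathbb{R}^S$ with the symmetric bilinear form $B$ defined by $B(\alpha_s,\alpha_t)=-\cos(\pi/m(s,t))$ and the standard representation in which each $s\in S$ acts as $s(v)=v-2B(\alpha_s,v)\alpha_s$. The root system $\Phi=W\cdot\{\alpha_s:s\in S\}$ splits as $\Phi^+\sqcup\Phi^-$, and reflections of $W$ — equivalently walls of $\Lambda$ — are in bijection with pairs $\{\alpha,-\alpha\}$. For a positive root $\alpha$ with corresponding wall $Q_\alpha$, a standard translation between the representation and the Cayley graph shows that $Q_\alpha$ separates $1$ from $w$ iff $w^{-1}\alpha\in\Phi^-$, and the combinatorial distance from $1$ to $\bar{Q}_\alpha$ equals the minimum length of such $w$.

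Next I would introduce the dominance relation: $\alpha$ \emph{dominates} $\beta$, written $\alpha\succeq\beta$, if $\{w\in W:w^{-1}\alpha\in\Phi^-\}\subseteq\{w\in W:w^{-1}\beta\in\Phi^-\}$. The geometric content is precisely what the theorem needs: if $\alpha\ne\beta$ are distinct positive roots with $\alpha\succeq\beta$, then the walls $Q_\alpha$ and $Q_\beta$ are parallel, and $Q_\beta$ separates $Q_\alpha$ from $1$. Consequently any strict dominance chain $\alpha=\alpha_1\succ\alpha_2\succ\cdots\succ\alpha_n$ yields $n$ pairwise parallel walls, each nested strictly closer to $1$ than the previous.

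With these in place, the theorem reduces to: if the distance from $p$ to $\bar Q$ is at least $P(n)$, then the root $\alpha_Q$ corresponding to $Q$ lies above a strict dominance chain of length $n$. The pivotal input is the Brink-Howlett theorem that the set $E$ of \emph{elementary} roots — positive roots minimal in the dominance order — is finite. Granting this, an induction using the exchange condition shows that any positive root at large depth (the minimum word length realizing the wall crossing from the side containing $p$) dominates a strict chain whose length grows with the depth. Choosing $P(n)$ large enough to force depth at least $n+|E|$ then produces the required $n$ pairwise parallel walls separating $\bar Q$ from $p$, after translating the argument by the appropriate group element to place $p$ near $1$.

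The main obstacle, and the technical core of \cite{BrinkHowlett}, is the finiteness of $E$. Its proof hinges on a delicate inequality for the Tits form applied to chains of roots arising from successive reflections, ruling out arbitrarily long chains of non-dominating reflections. I would treat this finiteness as a black box and concentrate the exposition on the clean geometric deduction of Theorem~\ref{parallel} from it via the dominance dictionary above.
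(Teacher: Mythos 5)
The paper does not prove Theorem~\ref{parallel}; it is cited verbatim from Brink--Howlett as Theorem~2.8 of their paper, and the present article treats it as a black box. Your proposal is therefore not competing with anything in this paper — it is a sketch of the original Brink--Howlett argument itself. As such, the outline is on the right track: the translation between walls of $\Lambda$ and positive roots of the Tits form, the dominance order $\alpha\succeq\beta$ (your $w^{-1}$ convention is equivalent to the usual one after replacing $w$ by $w^{-1}$), the observation that a strict dominance chain yields nested parallel walls, and the reduction to large depth forcing a long chain via finiteness of the elementary roots $E$ — this is the correct skeleton. Two caveats worth flagging. First, you have black-boxed precisely the part that is hard: the finiteness of $E$ \emph{is} the content of Brink--Howlett, so the proposal is an exposition of the deduction, not an independent proof. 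Second, the step ``an induction using the exchange condition shows that any positive root at large depth dominates a strict chain whose length grows with the depth'' is stated too loosely; the actual inductive argument needs the quantitative fact that a non-elementary positive root dominates a root of strictly smaller depth, together with a uniform bound on the depth of elementary roots, and it is worth spelling out that every pair in the resulting chain is comparable (hence the walls are pairwise parallel), not merely consecutive pairs. With those details filled in, the sketch would faithfully reproduce the cited proof; since the paper simply invokes the theorem, there is no alternative approach in the paper to compare against.
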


For a path $\beta$ in $\Lambda$ and vertex $t$ of $\beta$ let the {\it bracket number of $t$ in $\beta$} be the number of walls $Q$ such that there is an edge of $Q$ on either side of $t$ in $\beta$. Denote the bracket number of $t$ in $\beta$ as $B(t,\beta)$. If $\tau$ is a subpath of $\beta$ the {\it bracket number of $\tau$ in $\beta$} is the maximum of the numbers $B(t,\beta)$ for all vertices $t$ of $\tau$. Denote this number $B(\tau, \beta)$. Call $B(\beta)\equiv B(\beta, \beta)$ the {\it bracket number of $\beta$}.

\section{Wall computations}
If $\alpha$ is an edge path in the Cayley graph $\Lambda$ with consecutive vertices $a=v_0,v_1,\ldots, v_n =b$, then an $L$-{\it approximation to} $\alpha$ is an edge path in $\Lambda$ connecting $a$ and $b$ of the form $(\alpha_1,\ldots ,\alpha_n)$ where for all $i$, $\alpha _i$ is geodesic connecting $w_{i-1}$ to $w_i$ and $w_i$ is within $L$ of $v_i$. The points $w_i$ are called {\it approximation points}. 

\begin{lemma} Suppose $(W,S)$ is a finitely generated Coxeter system, $\alpha$ is an edge path in the Cayley graph $\Lambda (W,S)$ connecting $a$ and $b$, and $\beta$ is an $L$-approximation of $\alpha$. Then the bracket number $B(\beta)$ is bounded by a constant only depending on $B(\alpha)$, $L$ and constants independent of the choice of $\alpha$. 
\end{lemma}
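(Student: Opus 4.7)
The plan is to show that every wall counted by $B(\beta)$ lies within a bounded distance of some vertex of $\alpha$, where the bound depends only on $B(\alpha)$, $L$, and the constants of the system $(W,S)$. First I would observe that each geodesic segment $\alpha_i$ has length at most $2L+1$, since $d(w_{i-1},w_i)\le d(w_{i-1},v_{i-1})+d(v_{i-1},v_i)+d(v_i,w_i)\le 2L+1$, so every vertex $t$ of $\beta$ lies within a uniformly bounded distance of some vertex $v_j$ of $\alpha$. Fix such a pair $(t,v_j)$.

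The key observation is a sign-change principle: whenever a geodesic $\alpha_i$ crosses a wall $Q$, the points $w_{i-1}$ and $w_i$ lie on opposite sides of $Q$, and if both $v_{i-1}$ and $v_i$ were farther than $L$ from $\bar Q$ they would share sides with the corresponding $w$'s, forcing the edge $v_{i-1}v_i$ of $\alpha$ to cross $Q$ --- a contradiction since that edge would then lie in $\bar Q$. Hence at least one of $v_{i-1}$, $v_i$ is within $L$ of $\bar Q$. Since a wall $Q$ contributing to $B(t,\beta)$ forces $\beta$ to cross $Q$ in some $\alpha_{i_1}$ before $t$ and some $\alpha_{i_2}$ after $t$, applying this principle to both crossings yields indices $k_1\le j\le k_2$ with $d(v_{k_1},\bar Q)\le L$ and $d(v_{k_2},\bar Q)\le L$; the degenerate cases where a crossing occurs inside $\alpha_j$ itself give $d(v_j,\bar Q)\le L+1$ directly.

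The heart of the proof is an application of the Parallel Wall Theorem. Suppose for contradiction that $d(v_j,\bar Q)>P(N)$ with $N=B(\alpha)+L+1$. Then there exist $N$ pairwise parallel walls $Q_1',\ldots,Q_N'$ separating $\bar Q$ from $v_j$, linearly ordered so that $Q_a'$ lies on the $\bar Q$-side of $Q_b'$ for $a<b$; for $i>L$ the inequality $d(\bar Q_i',\bar Q)\ge i-1\ge L$ forces any point within $L$ of $\bar Q$ --- in particular both $v_{k_1}$ and $v_{k_2}$ --- to lie on the $\bar Q$-side of $Q_i'$, opposite from $v_j$. Hence the subpaths of $\alpha$ from $v_{k_1}$ to $v_j$ and from $v_j$ to $v_{k_2}$ each cross $Q_i'$, so $Q_i'$ contributes to the bracket number of $v_j$ in $\alpha$; this yields $N-L$ distinct such walls, contradicting $N-L>B(\alpha)$.

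Consequently $d(v_j,\bar Q)\le R:=\max(L+1,P(B(\alpha)+L+1))$, so every wall counted by $B(t,\beta)$ has an edge within the $R$-ball of $\Lambda$ about $v_j$; the number of such walls is at most the number of edges in this ball, which is bounded by a constant depending only on $R$ and $|S|$. Taking the supremum over $t$ yields the desired uniform bound on $B(\beta)$. I expect the principal obstacle to be the careful bookkeeping around which segments $\alpha_{i_1},\alpha_{i_2}$ carry the two crossings of $Q$ --- in particular tracking off-by-one shifts when $t$ is interior to $\alpha_j$, or when $k_2=j$ so the ``after'' subpath degenerates --- but each such configuration either yields an immediate distance bound or reduces to the same Parallel Wall argument.
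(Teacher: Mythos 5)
Your proof is correct and follows essentially the same route as the paper's: both arguments bound the distance from a nearby vertex $v_j$ of $\alpha$ to a bracketing wall $\bar Q$ via the Parallel Wall theorem, by showing that all but roughly $O(L)$ of the parallel separating walls must themselves bracket $v_j$ on $\alpha$, hence their number is at most $B(\alpha)+O(L)$, and then invoke local finiteness of $\Lambda$. The only packaging difference is in how one produces the vertex of $\alpha$ close to $\bar Q$: the paper builds an explicit path $\delta_j$ of length $\leq 3L$ from the endpoint of the crossing edge back to $v_j$ and counts the walls it can cross, whereas you use a sign-change observation to locate $v_{k_1},v_{k_2}$ within $L$ of $\bar Q$ and exploit the linear ordering of the parallel walls; these are interchangeable devices serving the same purpose.
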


\begin{proof} Let the consecutive vertices of $\alpha$ be $a=v_0, v_1,\ldots ,v_n=b$, the approximation vertices of $\beta$ be $a=w_0, w_1,\ldots ,w_m=b$ (so that $d_{\Lambda}(w_i,v_i)\leq L$ for all $i$) and $\beta _i$ be the geodesic subpath of $\beta$ connecting $w_{i-1}$ to $w_i$. Then $\beta=(\beta_1,\ldots ,\beta_m)$. 
If $x$ is a vertex of $\beta_i$ and $B(x,\beta)$ is ``large", then (as each edge belongs to exactly one wall) there is a wall $Q$ that brackets $x$ on $\beta$ that is ``far" from $x$ and hence far from $v_i$. 
Hence it suffices to bound the distance between $v_i$ and a wall $Q$ that brackets $x$ on $\beta$. The Parallel Wall theorem implies this distance is large iff there is a large set $\mathcal Q$ of (mutually parallel) walls that separate $\bar Q$ from $v_i$, so it suffices to bound 
the size of the set $\mathcal Q$ of walls that separate $\bar Q$ from $x$. Say $j<i<k$ such that $e_j$ and $e_k$ are edges of $\beta_j$ and $\beta _k$ respectively, and 
each of $e_j$, $e_k$ belongs to the wall $Q$. (See figure 1)

\medskip


\hspace{.5in}\includegraphics[scale=.9]{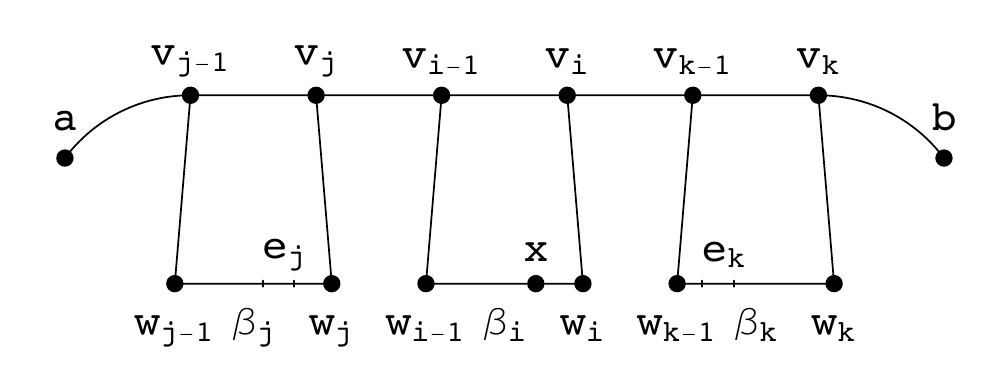}

\centerline{Figure 1}

\medskip


A path $\delta_j$, that begins at the end point of $e_j$ follows $\beta_j$ to $w_j$ and then travels geodesically from $w_j$ to $v_j$ has length $\leq 3L$. 
If $\alpha_{j,i}$ is the subpath of $\alpha$ from $v_j$ to $v_i$, then the path $(\delta _j,\alpha_{j,i})$ must cross each wall of $\mathcal Q$. Similarly define a path from $e_k$ to $v_i$ (which also crosses each wall of $\mathcal Q$). Then  at most $6L$ walls of $\mathcal Q$ do not bracket $v_i$ on $\alpha$. This bounds the size of $\mathcal Q$ by $6L+B(\alpha)$. 
\end{proof}

\begin{lemma} \label{reflect} Suppose $(W,S)$ is a Coxeter system and $\alpha=(e_1,\ldots, e_n)$ is a geodesic edge path connecting vertices $a$ and $b$ in $\Lambda(W,S)$ such that $\alpha$ does not cross the wall $Q$. If $e_0$ is an edge at $a$ and $e_{n+1}$ an edge at $b$ such that $e_0$ and $e_{n+1}$ belong to the wall $Q$ then each vertex of $\alpha$ is within $P(1)$ of $\bar Q$ (where $P$ is the function of theorem \ref{parallel}). In particular, if $v$ is a vertex of $\alpha$ and $v' $ the reflection of $v$ across $Q$ then $d(v,v')\leq 2P(1)+1$. 
\end{lemma}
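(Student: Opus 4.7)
My approach is to first establish the main claim — that every vertex of $\alpha$ lies within $P(1)$ of $\bar Q$ — by combining Theorem~\ref{parallel} with the standard Coxeter fact that a geodesic edge-path in $\Lambda$ crosses each wall at most once. The distance estimate $d(v,v') \leq 2P(1)+1$ will then follow from an isometry argument using the reflection across $Q$.

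For the main claim I would argue by contradiction. The endpoints $a$ and $b$ are vertices of edges of $Q$, hence lie on $\bar Q$, so it suffices to handle an interior vertex $v$ of $\alpha$ with $d(v,\bar Q) \geq P(1)$. Theorem~\ref{parallel} applied with $n=1$ then produces a wall $R$ separating $v$ from $\bar Q$. Any such separating $R$ must be parallel to $Q$ (else $\bar Q$ would meet both sides of $R$), so the entire set $\bar Q$ — and in particular both $a$ and $b$ — lies on the side of $R$ opposite to $v$. The path $\alpha$ therefore crosses $R$ at least twice, once between $a$ and $v$ and once between $v$ and $b$, contradicting the fact that a geodesic in $\Lambda$ crosses each wall at most once.

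For the ``in particular'' statement, let $\sigma$ be the reflection across $Q$. It is an isometry of $\Lambda$ which swaps the two endpoints of every edge of $Q$. Given a vertex $v$ of $\alpha$, pick a vertex $u$ of some edge of $Q$ with $d(v,u) \leq P(1)$ (possible by the main claim); then $\sigma u$ is the opposite endpoint of that edge, so $d(u,\sigma u) = 1$. Using that $\sigma$ is an isometry, the triangle inequality gives
$$d(v,v') = d(v,\sigma v) \leq d(v,u) + d(u,\sigma u) + d(\sigma u,\sigma v) = 2\,d(v,u) + 1 \leq 2P(1) + 1.$$

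The crux of the argument — and the step where I expect the main obstacle — is the wall-crossing step in the second paragraph: once one recognizes that $\bar Q$ lies entirely on one side of any separating parallel wall $R$, both $a$ and $b$ are forced to the side opposite $v$, so the single interior excursion to $v$ must be paid for by two crossings of $R$. The Parallel Wall theorem and the at-most-one-crossing property of geodesics then collide to finish the proof, and the reflection estimate in the third paragraph is essentially automatic.
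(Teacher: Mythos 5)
Your proof is correct and takes essentially the same approach as the paper: apply the Parallel Wall theorem with $n=1$ to a vertex $v$ supposedly far from $\bar Q$, observe that the separating wall must then be crossed twice by $\alpha$ (since $a,b \in \bar Q$), contradicting geodesicity, and then deduce the reflection estimate by the triangle inequality through a nearby vertex of $\bar Q$. The only differences are cosmetic — you spell out the parallelism remark and the ``in particular'' computation, which the paper leaves implicit.
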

\begin{proof} 
Otherwise, there is a wall $Q'$ separating a vertex $v$ of $\alpha$ from $Q$. Hence there is an edge of $\alpha$ between $a$ and $v$ that belongs to $Q'$ and an edge of $\alpha$ between $v$ and $b$ that belongs to $Q'$. This is impossible as $\alpha$ is geodesic.
\end{proof}

\begin{proposition} \label{rightside}
Suppose $(W,S)$ is a Coxeter system and $\alpha$ is an edge path of $\Lambda(W,S)$ connecting $a$ and $b$. Then there is an $L$-approximation $\beta$ to $\alpha$ such that each vertex of $\beta$ is on a geodesic connecting $a$ and $b$ and such that $L\leq (2P(1)+1)B(\alpha) $. 
\end{proposition}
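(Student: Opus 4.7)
My plan is to pick, for each vertex $v_i$ of $\alpha$, a companion vertex $w_i$ lying on a geodesic from $a$ to $b$ and close to $v_i$, and then to link consecutive $w_i$'s by geodesics. Throughout, let $W(x,y)$ denote the set of walls of $\Lambda$ separating $x$ and $y$, so that $d(x,y)=|W(x,y)|$, and recall the symmetric-difference identity $W(x,z)=W(x,y)\triangle W(y,z)$.

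The natural choice is to define $w_i$ as the unique vertex with
\[
W(a,w_i)\;=\;W(a,v_i)\cap W(a,b).
\]
Existence of such a $w_i$ is the standard Coxeter-combinatorial fact that the intersection of two inversion sets is itself an inversion set (equivalently, the weak order on $W$ is a meet semilattice). Since $W(a,w_i)\subseteq W(a,b)$, $w_i$ automatically lies on some geodesic from $a$ to $b$; in particular $w_0=a$ and $w_n=b$. Applying the symmetric-difference identity to $a,w_i,v_i$ yields $W(v_i,w_i)=W(a,v_i)\setminus W(a,b)$, so
\[
d(v_i,w_i)\;=\;|W(a,v_i)\setminus W(a,b)|.
\]
Each wall $Q$ in $W(a,v_i)\setminus W(a,b)$ places $v_i$ on the side of $Q$ opposite both $a$ and $b$; since $\alpha$ travels from $a$ through $v_i$ to $b$, it must cross $Q$ at edges both before and after $v_i$, so $Q$ brackets $v_i$ on $\alpha$. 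Thus $d(v_i,w_i)\leq B(v_i,\alpha)\leq B(\alpha)$.

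To build $\beta$, join successive $w_{i-1},w_i$ by a chosen geodesic $\beta_i$. Because $v_{i-1},v_i$ are the endpoints of a single edge with some wall $Q$, the sets $W(a,v_{i-1})$ and $W(a,v_i)$ differ by $\{Q\}$; intersecting with $W(a,b)$ shows that $W(a,w_{i-1})$ and $W(a,w_i)$ either coincide (if $Q\notin W(a,b)$) or differ by $\{Q\}$ (if $Q\in W(a,b)$), in which latter case $w_{i-1}$ and $w_i$ are adjacent. Each $\beta_i$ therefore has length at most $1$, every vertex of $\beta$ is some $w_i$, and each such vertex lies on an $a$-to-$b$ geodesic. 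Choosing $L=B(\alpha)$ then satisfies the required bound $L\leq(2P(1)+1)B(\alpha)$.

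The one delicate step is the existence of the meet vertex $w_i$ with the prescribed inversion set; this is well known in Coxeter combinatorics but could alternatively be avoided by an iterative reflection construction, in which one picks at each stage a wall $Q$ in the current excess $W(a,v)\setminus W(a,b)$ whose $\bar Q$ is closest to $v$, uses the Parallel Wall theorem to conclude $d(v,\bar Q)<P(1)$, and applies the distance inequality of Lemma \ref{reflect} to bound each reflection step by $2P(1)+1$. This alternative approach is presumably what yields the factor $2P(1)+1$ in the stated bound, although the weak-order argument above is in fact slightly sharper.
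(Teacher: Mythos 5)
The central step of your argument---defining $w_i$ to be the vertex with $W(a,w_i)=W(a,v_i)\cap W(a,b)$---rests on the claim that the intersection of two inversion sets is again an inversion set, and this is false. Translating so that $a$ is the identity, $W(a,\cdot)$ is the left inversion set $T_L(\cdot)$. In type $A_2$ (so $W=S_3$, $S=\{s_1,s_2\}$) one has
\[
T_L(s_1s_2)=\{s_1,\ s_1s_2s_1\},\qquad T_L(s_2s_1)=\{s_2,\ s_1s_2s_1\},
\]
whose intersection $\{s_1s_2s_1\}$ is not the inversion set of any element: the only singleton inversion sets are $\{s_1\}$ and $\{s_2\}$. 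This configuration actually arises from a path, e.g.\ $\alpha$ given by $e,\,s_1,\,s_1s_2,\,s_1s_2s_1,\,s_2s_1$ with $a=e$, $b=s_2s_1$, and $v_i=s_1s_2$. So the vertex $w_i$ you want need not exist. (What the meet-semilattice property of weak order gives is that $v_i\wedge b$ exists, but its inversion set is only \emph{contained} in $T_L(v_i)\cap T_L(b)$ and here is strictly smaller, namely empty.) Consequently the claimed sharper bound $L=B(\alpha)$ is not established, and the subsequent observation that consecutive $w_{i-1},w_i$ are equal or adjacent---which depends on the same false identity---also falls away.

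Your alternative sketch (iteratively reflect $v$ across a wall of $W(a,v)\setminus W(a,b)$ closest to $v$) is in the right spirit, but as stated it has a second gap: after reflecting $v$ across such a wall $Q$ to get $v'$, it is not automatic that $|W(a,v')\setminus W(a,b)|$ decreases, since any wall $R\in W(v,v')$ with $R\notin W(a,v)$ and $R\notin W(a,b)$ enters the excess, and $W(v,v')$ can contain walls other than $Q$. The paper controls this by choosing the wall more carefully: it fixes geodesics $\alpha_i$ from $a$ to $v_i$ and $\beta_i$ from $v_i$ to $b$, lets $e$ be the \emph{last} edge of $\alpha_i$ in a wall crossed twice by $(\alpha_i,\beta_i)$ and $d$ the matching edge of $\beta_i$, observes that the segment of $(\alpha_i,\beta_i)$ between $e$ and $d$ is geodesic (so Lemma \ref{reflect} applies to give $d(v_i,v_i')\le 2P(1)+1$), and deletes $e,d$; this drops $d(a,v_i)+d(v_i,b)$ by $2$, so $N_i=\tfrac12\bigl(d(a,v_i)+d(v_i,b)-d(a,b)\bigr)\le B(\alpha)$ strictly decreases and the process terminates after at most $B(\alpha)$ steps. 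This is exactly what produces the $(2P(1)+1)B(\alpha)$ bound. Finally, once $w_{i-1}$ and $w_i$ are no longer guaranteed adjacent, you also need the paper's closing argument that every vertex of a geodesic from $w_{i-1}$ to $w_i$ lies on an $a$-to-$b$ geodesic.
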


\begin{proof}
Let the consecutive vertices of $\alpha$ be $a=v_0,\ldots ,v_n=b$. For $0<i<n$ we choose an approximation point $w_i$ for $v_i$ as follows. Let $\alpha_i$ be the geodesic from $a$ to $v_i$ and $\beta_i$ the geodesic from $v_i$ to $b$. Each wall of $ (\alpha _i,\beta_i)$ is crossed exactly once or twice. The number of  walls crossed twice by $ (\alpha _i,\beta_i)$ is 
$$N_i\equiv {1\over 2}(d(a,v_i)+d(v_i,b)-d(a,b))\leq B(\alpha)$$ 
Let $e$ be the last edge of $\alpha_i$ belonging to a wall which is crossed twice by $(\alpha _i,\beta_i)$ and $d$ the edge of $\beta_i$ in the same wall as $e$. (See figure 2.)

\medskip


\hspace{.6in}\includegraphics[scale=.9]{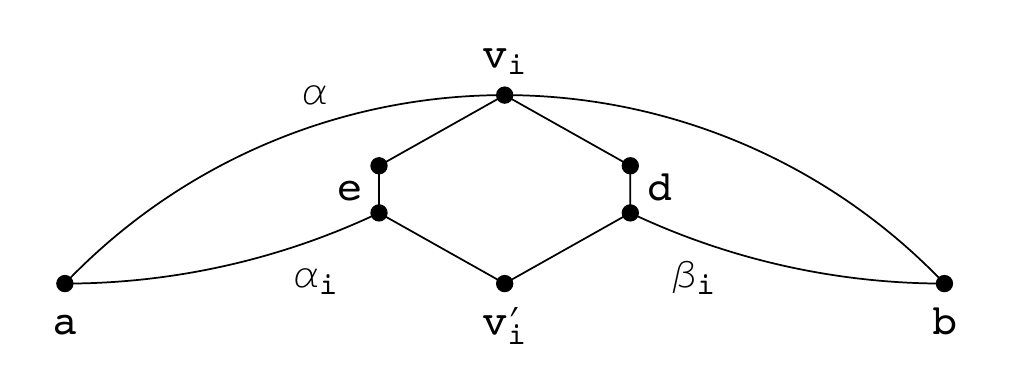}

\centerline{Figure 2}

\medskip



The segment of $(\alpha _i,\beta_i)$ between $e$ and $d$ is geodesic. Considering the reflection of this segment across the wall containing $e$ and $d$ (equivalently, delete $e$ and $d$ from $(\alpha _i,\beta_i)$). Then we see that $v_i'$, the reflection of $v_i$, is within $2P(1)+1$ of $v_i$ (lemma \ref{reflect}), and the distance from $v_i'$ to $a$ (respectively $b$) is less than that of $v_ i$ to $a$ (respectively $b$). Hence ${1\over 2}(d(a,v'_i)+d(v_i',b)-d(a,b))<N_i$ and a geodesic from $a$ to $v_i'$ followed by a geodesic from $v_i'$ to $b$ crosses at most $N_i-1$ walls twice.  Continuing as above at most $N_i(\leq B(\alpha))$ such reflections are needed to move $v_i$ to a point $w_i$ on a geodesic between $a$ and $b$, and so $d(w_i, v_i)\leq (2P(1)+1)B(\alpha)$. 

It remains to see that each vertex of a geodesic connecting $w_i$ and $w_{i+1}$ belongs to a geodesic connecting $a$ and $b$. Consider the edge path $(\delta_i, \beta_i, \gamma_i)$ where $\delta_i$ is a geodesic connecting $a$ to $w_i$, $\beta_i$ is a geodesic connecting $w_i$ to $w_{i+1}$ and 
$\gamma_i$ is a geodesic connecting $w_{i+1}$ to $b$. The paths $\delta_i$ and $\gamma_i$ only cross walls  crossed by some (equivalently any) geodesic connecting $a$ to $b$. If a vertex $v$ of 
$\beta_i$ is not on a geodesic connecting $a$ and $b$ then there is a wall $R$ separating $v$ from
 some (equivalently every) geodesic connecting $a$ and $b$. As $R$ separates $v$ from $a$, and 
 $\delta_i$ does not cross $R$, $\beta_i$ must cross $R$ between $w_i$ and $v$. 
 Similarly $\beta_i$ must cross $R$ between $v$ and $w_{i+1}$. This is impossible as $\beta_i$ is geodesic.
\end{proof}

If $\gamma$ is an edge path in $\Lambda$ connecting the vertices $a$ and $b$, then each wall separating $a$ and $b$ is crossed an odd number of times by $\gamma$ and each wall not separating $a$ and $b$ is crossed and even number of times by $\gamma$.  If two edges of $\gamma$ belong to the same wall then they may be ``deleted" to obtain another path from $a$ to $b$ (i.e. if edges $e$ and $d$ of $\gamma$ belong to the same wall $Q$, and $\tau$ is the segment of $\gamma$ between $e$ and $d$, then $(e,\tau, d)$ can be replaced in $\gamma$ by $\tau'$, where $\tau'$ is the reflection of $\tau$ across $Q$, to obtain a shorter path connecting $a$ and $b$). If $\alpha$ is a geodesic connecting $a$ and $b$ then the walls separating $a$ and $b$ are the walls determined by the edges of $\alpha$, so the walls separating $a$ and $b$ are in 1-1 correspondence with the edges of some (any) geodesic connecting $a$ and $b$. The following observations are straightforward.

\begin{lemma} \label{easy}
Suppose $\beta$ is an edge path in $\Lambda$ connecting the vertices $a$ and $b$ such that each vertex of $\beta$ is on a geodesic connecting $a$ and $b$. Then 

i) each edge of $\beta$ belongs to a wall that separates $a$ from $b$, 

ii) each wall crossed by $\beta$ is crossed an odd number of times, and 

iii) if $c$ and $d$ are vertices of $\beta$ then any wall separating $c$ and $d$ also separates $a$ and $b$. 
\end{lemma}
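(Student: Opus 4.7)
The plan is to establish part (i) first, from which parts (ii) and (iii) follow almost immediately from standard properties of walls: the parity principle for wall crossings stated in the paragraph preceding the lemma, and the fact that walls partition the edges of $\Lambda$.

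For (i), let $e$ be an edge of $\beta$ with endpoints $v$ and $v'$. By hypothesis, both $v$ and $v'$ lie on geodesics from $a$ to $b$, so $d(a,v)+d(v,b)=d(a,v')+d(v',b)=d(a,b)$. Since $v'=vs$ for some generator $s\in S$, and right multiplication by a generator in a Coxeter group changes word length by exactly $\pm 1$, we have $d(a,v')=d(a,v)\pm 1$; relabel if necessary so that $d(a,v')=d(a,v)+1$. Concatenating a geodesic from $a$ to $v$, the edge $e$, and a geodesic from $v'$ to $b$ gives a path of total length $d(a,v)+1+d(v',b)=d(a,b)$. This concatenation is therefore a geodesic from $a$ to $b$ passing through $e$, so the wall containing $e$ is crossed by a geodesic from $a$ to $b$, i.e., separates $a$ from $b$.

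For (ii), recall the parity principle: an edge path from $a$ to $b$ crosses each wall separating $a$ and $b$ an odd number of times, and each other wall an even number of times. By (i), every edge of $\beta$, and hence every wall crossed by $\beta$, separates $a$ from $b$, so each such wall is crossed an odd number of times. For (iii), let $c$ and $d$ be vertices of $\beta$ and $R$ a wall separating them. The subpath of $\beta$ from $c$ to $d$ must cross $R$, so it contains some edge $e$ belonging to $R$. By (i), this same $e$ lies in a wall separating $a$ from $b$. Since walls partition the edges of $\Lambda$, $R$ is precisely that wall, and hence $R$ separates $a$ from $b$.

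Since the author labels these observations as ``straightforward,'' no serious obstacle is anticipated; the only mildly nontrivial ingredient is the $\pm 1$ length change under multiplication by a generator used in (i), which is a standard consequence of the sign character on a Coxeter group.
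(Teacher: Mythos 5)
Your proof is correct. The paper omits any argument for this lemma, simply declaring the observations ``straightforward,'' and your write-up fills in the intended details: part (i) via the $\pm 1$ length change under right multiplication by a generator together with the fact that edges of a geodesic lie in exactly the walls separating its endpoints, with (ii) and (iii) then following from the parity principle and the partition of edges into walls, both of which are stated in the preceding paragraph.
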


The next result is a slightly more sophisticated version of lemma \ref{reflect}.

\begin{lemma} \label{near}
Suppose $\alpha$ is a geodesic edge path in $\Lambda$ connecting the vertices $a$ and $b$, $v$ is a vertex of $\alpha$, and $a$ and $b$ are each within distance $A$ of $\bar Q$ for some wall $Q$. Then $v$ is within distance $2A(2P(1)+1)+P(1)$ of $\bar Q$. 
\end{lemma}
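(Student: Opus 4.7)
The plan is to iteratively reduce to the setting of Lemma \ref{reflect}, where both endpoints already lie on $\bar Q$, by walking $a$ and $b$ along shortest paths into $\bar Q$ and tracking how far the point $v$ drifts as the connecting geodesic changes. Pick geodesics $\sigma_a=(a=a_0,\dots,a_A=a')$ from $a$ to a nearest point $a'\in\bar Q$ and $\sigma_b=(b=b_0,\dots,b_A=b')$ similarly, let $e_i$ be the edge of $\sigma_a$ from $a_i$ to $a_{i+1}$, and let $R_i$ be the wall containing $e_i$. Starting from $\alpha_0=\alpha$ and $v_0=v$, I inductively build a geodesic $\alpha_{i+1}$ from $a_{i+1}$ to $b$ and a vertex $v_{i+1}$ on it with $d(v_i,v_{i+1})\le 2P(1)+1$, and then iterate symmetrically on the $b$-end. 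After $2A$ steps, $v_{2A}$ lies on a geodesic $\alpha_{2A}$ between $a',b'\in\bar Q$ and is within $2A(2P(1)+1)$ of $v$.

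The inductive construction splits on whether $\alpha_i$ crosses $R_i$. If $\alpha_i$ does not cross $R_i$, a short wall-counting check shows that the walls separating $a_{i+1}$ from $b$ are exactly those of $\alpha_i$ together with $R_i$, forcing $d(a_{i+1},b)=d(a_i,b)+1$; then $\alpha_{i+1}:=(e_i,\alpha_i)$ is a geodesic and $v_{i+1}:=v_i$ suffices. If $\alpha_i$ does cross $R_i$, necessarily at a single edge $f$, I write $\alpha_i=\tau_1\cdot f\cdot\tau_2$ and let $r$ denote the reflection across $R_i$; then $\alpha_{i+1}:=r(\tau_1)\cdot\tau_2$ is a geodesic from $a_{i+1}=r(a_i)$ to $b$, and I set $v_{i+1}:=v_i$ if $v_i$ is a vertex of $\tau_2$ and $v_{i+1}:=r(v_i)$ otherwise. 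In the reflection case, $\tau_1$ is a geodesic from $a_i$ to an endpoint of $f$ not crossing $R_i$, carrying the $R_i$-edges $e_i$ and $f$ at its two endpoints, so Lemma \ref{reflect} delivers $d(v_i,r(v_i))\le 2P(1)+1$.

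To finish, I apply Lemma \ref{reflect} once more, now with the wall $Q$ itself, to $\alpha_{2A}$. If $\alpha_{2A}$ avoids $Q$, then $Q$-edges at $a',b'\in\bar Q$ place every vertex of $\alpha_{2A}$ within $P(1)$ of $\bar Q$; if $\alpha_{2A}$ crosses $Q$ in its unique $Q$-edge $g$, both endpoints of $g$ lie in $\bar Q$, so I apply Lemma \ref{reflect} to the two sub-geodesics on either side of $g$. Either way $d(v_{2A},\bar Q)\le P(1)$, and the triangle inequality yields $d(v,\bar Q)\le 2A(2P(1)+1)+P(1)$. The only subtle step is verifying geodesicity of $(e_i,\alpha_i)$ in the non-crossing case; once that is in hand, the reflection case is precisely what Lemma \ref{reflect} was designed to handle, and the overall argument is a natural iteration of the reflection trick already used in the proof of Proposition \ref{rightside}.
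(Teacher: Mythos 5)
Your argument is correct and relies on the same engine as the paper's proof: iterate Lemma~\ref{reflect} at most $2A$ times, each iteration displacing the tracked vertex by at most $2P(1)+1$, and finish with one more application of Lemma~\ref{reflect} to land within $P(1)$ of $\bar Q$. The bookkeeping, however, differs in a way worth noting. The paper forms $\delta_0=(\beta,\alpha,\gamma)$, where $\beta,\gamma$ run from $a',b'\in\bar Q$ to $a,b$, and performs at most $2A$ \emph{deletions} on $\delta_0$; because the intermediate paths $\delta_j$ need not be geodesic, the paper must impose an extra condition (marked $\ast$) on the choice of deletion pair precisely so that Lemma~\ref{reflect} remains applicable at each stage. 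Your version instead walks $a$ (then $b$) into $\bar Q$ one edge at a time along $\sigma_a$ (then $\sigma_b$), maintaining at every step an honest geodesic $\alpha_i$ from $a_i$ to $b$; the subpath $\tau_1$ that you feed to Lemma~\ref{reflect} is then automatically geodesic, so no analogue of the condition $\ast$ is needed. The price is the short wall-count verifying $d(a_{i+1},b)=d(a_i,b)\pm 1$, which you correctly supply; the payoff is that geodesicity is preserved by construction rather than by careful selection of deletion pairs. Two small points to tighten: choose $a'$ to be a \emph{vertex} of $\bar Q$ on the same side of $Q$ as $a$ (as the paper does) --- this is automatic for a nearest vertex of $\bar Q$ to $a$ and guarantees $\sigma_a$ never crosses $Q$, so that no wall $R_i$ coincides with $Q$; also $|\sigma_a|=d(a,\bar Q)\le A$ rather than exactly $A$, which only improves the bound.
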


\begin{proof} Let $a'$ (respectively $b'$) be a vertex of $\bar Q$ within $A$ of $a$ (respectively $b$) and on the same side of $Q$ as is $a$ (respectively $b$). Let $\beta$ (respectively $\gamma$) be a geodesic from $a'$ to $a$ (respectively $b$ to $b'$). 

\medskip

\noindent {\bf Case 1.} The geodesic $\alpha$ does not cross $Q$.

\noindent In this case the path $\delta _0\equiv (\beta, \alpha, \gamma)$ does not cross $Q$. Since $\vert \beta\vert\leq A$ and $\vert \gamma\vert\leq A$, a sequence of at most $2A$ deletions (the first in the path $\delta_0$) determines a geodesic connecting $a'$ to $b'$ which does not cross $Q$. 

\medskip

{\bf $\ast )$} Each deletion is taken so that if $e$ and $d$  are the deleted edges, then the subpath determined by $e$ (or $d$) along with the subpath between $e$ and $d$ is geodesic. 

\medskip

If $e_1$ and $d_1$ are the first such deletion edges (so $e_1$ and $d_1$ are edges of $\delta _0$) then let $\delta_1$ be obtained from $\delta _0$ by deleting $e_1$ and $d_1$. If $v$ is not between $e_1$ and $d_1$ then $v$ is a vertex of $\delta_0$. If $v$ is between $e_1$ and $d_1$, then $v_1$, the reflection of $v$ across the wall $[e_1]=[d_1]$, is within $2P(1)+1$ of $v$, by lemma \ref{reflect}. (Note that the hypotheses of lemma \ref{reflect} are satisfied since we require condition $\ast $.) In any case $\delta _1$ contains a vertex $v_1$ within $2P(1)+1$ of $v$. If $e_2$ and $d_2$ are deleting edges of $\delta_1$ (satisfying $\ast $), then let $\delta _2$ by obtained from $\delta_1$ by deleting $e_2$ and $d_2$. Lemma \ref{reflect} implies $\delta_2$ contains a vertex $v_2$ within $2P(1)+1$ of $v_1$ and so within $2(2P(1)+1)$ of $v$. Inductively, after $K\leq 2A$ deletions, we obtain a geodesic $\delta_K$ connecting $a'$ and $b'$, and $\delta_K$ contains a vertex $v_K$ within $K(2P(1)+1)$ of $v$.  Note that $\delta_k$ does not cross $Q$. By lemma \ref{reflect}, $v_K$ is within $P(1)$ of $\bar Q$ so that $v$ is within $2A(2P(1)+1)+P(1)$ of $\bar Q$.  This completes case 1.

\medskip

\noindent {\bf Case 2.} Suppose $\alpha$ crosses $Q$.

\noindent Say the edge $e$ of $\alpha$ between $v$ and $b$ belongs to $Q$. Repeat the case 1 argument with $\delta_0$ replaced by $(\beta, \alpha')$, where $\alpha'$ is the subsegment of $\alpha$ from $a$ to the initial point of $e$. Similarly if $e\in Q$ is an edge of $\alpha$ between $a$ and $v$. Note that in both case 2 scenarios, at most $A$ deletions are required to straighten to a geodesic, so the bound is reduced to $A(2(P(1)+1)+P(1)$. 
\end{proof}

\section{Tracking Quasi-geodesics}

We are interested in quasi-geodesic edge paths in $\Lambda$. An {\it edge path} in $\Lambda$ is a continuous map $\beta:[0,n]\to \Lambda$ such that $n\in \mathbb Z^+$ and for each non-negative integer $k$, $\beta$ maps the interval $[k,k+1]$ isometrically to an edge of $\Lambda$. Similarly if $\beta:[0,\infty)\to \Lambda$, then $\beta$ is called a {\it ray} and, if $\beta:(-\infty, \infty)\to \Lambda$ then $\beta$ is called a {\it line}. An edge path $\beta$ is a $(\lambda,\epsilon)$-{\it quasi-geodesic} if for each pair of integers $s$ and $t$, $\vert s-t\vert\leq \lambda d(\beta(s),\beta(t))+\epsilon$. If $\alpha$ and $\beta$ are edge paths, then $\beta$ is $K${-\it tracked} by $\alpha$ if each vertex of $\beta$ is within $ K$ of a vertex of $\alpha$.

\begin{lemma} \label{doubletrack}
For $i\in \{1,2\}$ suppose $\beta_i$ is a $(\lambda_i,\epsilon_i)$-quasi-geodesic edge path in $\Lambda$, $\beta _1$ is $K$-tracked by $\beta _2$ and $\beta_1(0)$ is within $K$ of $\beta_2(0)$. Assume both $\beta_1$ and $\beta_2$ are bi-infinite, or both are rays, or both are finite length and the terminal points of $\beta_1$ and $\beta_2$ are within $K$ of one another. Then $\beta_2$ is $(\lambda_2(2K+1)+\epsilon_2+K)$-tracked by $\beta_1$. 
\end{lemma}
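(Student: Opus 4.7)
The plan is to convert the one-sided $K$-tracking of $\beta_1$ by $\beta_2$ into two-sided tracking via a discrete intermediate value argument on a parameter-matching map. For each integer $s$ in the domain of $\beta_1$, choose an integer $f(s)$ in the domain of $\beta_2$ with $d(\beta_1(s),\beta_2(f(s))) \leq K$; such a choice exists by the tracking hypothesis. The strategy is to show that $f$ has bounded jumps and that it essentially covers the parameter range of $\beta_2$, up to a controlled gap.

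First I would estimate the step size of $f$. For consecutive integers $s$ and $s+1$, the triangle inequality gives $d(\beta_2(f(s)),\beta_2(f(s+1))) \leq K + 1 + K = 2K+1$, so the quasi-geodesic property of $\beta_2$ yields $|f(s+1) - f(s)| \leq J$, where $J := \lambda_2(2K+1) + \epsilon_2$.

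Next I would pin down the boundary behavior of $f$. Since $d(\beta_1(0),\beta_2(0)) \leq K$, one gets $d(\beta_2(0),\beta_2(f(0))) \leq 2K$, hence $|f(0)| \leq 2K\lambda_2 + \epsilon_2 < J$. In the finite-length case the corresponding estimate at the terminal parameter follows from the hypothesis that terminal points of $\beta_1$ and $\beta_2$ are within $K$. In the ray case $\beta_1(s) \to \infty$ forces $f(s) \to \infty$. In the bi-infinite case, $|f(s)| \to \infty$ on both ends, and a further argument is needed to show $f(s) \to +\infty$ in one direction and $-\infty$ in the other, i.e.\ to rule out a ``U-turn''.

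With these ingredients, for any vertex $t$ of $\beta_2$, either $s=0$ directly satisfies $|f(0) - t| \leq J$ (when $t$ lies near the starting endpoint), or there exist integers $s_-, s_+$ with $f(s_-) \leq t \leq f(s_+)$; discrete intermediate value applied to the bounded-jump function $f$ then produces an integer $s$ with $|f(s) - t| \leq J$. The triangle inequality gives
$$d(\beta_2(t),\beta_1(s)) \leq d(\beta_2(t),\beta_2(f(s))) + d(\beta_2(f(s)),\beta_1(s)) \leq J + K = \lambda_2(2K+1) + \epsilon_2 + K,$$
which is the asserted tracking bound. The main obstacle is the U-turn exclusion in the bi-infinite case: if both ends of $\beta_1$ tracked the same end of $\beta_2$, then the bounded-jump property would yield $s_1 \to +\infty$ and $s_2 \to -\infty$ whose $f$-values lie in a common bounded window, forcing $\beta_1(s_1)$ and $\beta_1(s_2)$ to stay uniformly close in $\Lambda$ while $|s_1 - s_2| \to \infty$, contradicting that $\beta_1$ is a $(\lambda_1,\epsilon_1)$-quasi-geodesic.
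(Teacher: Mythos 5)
Your proposal is correct and takes essentially the same approach as the paper: define a parameter-matching map with bounded jumps (the same bound $\lambda_2(2K+1)+\epsilon_2$), pin down its behavior at the origin and at the terminal parameter (the paper simply normalizes $a(0)=0$, $a(n_1)=n_2$, but your bounds play the same role), then use a discrete intermediate-value argument, with a separate contradiction via the $(\lambda_1,\epsilon_1)$-quasi-geodesic property of $\beta_1$ to rule out the U-turn in the bi-infinite case. The paper's U-turn argument is phrased in terms of $a(-i)$ being squeezed between $a(j)$ and $a(j+1)$ for large $j$, but the substance is identical to your ``common bounded window'' contradiction.
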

\begin{proof} 
Since each vertex of $\beta_1$ is within $K$ of a vertex of $\beta_2$, we may
define an integer function $a$ such that for each integer $i$ (in the domain of $\beta_1$), $\beta_1(i)$ is within $K$ of $\beta_2(a(i))$. We take $a(0)=0$ and if $\beta_i$ has $n_i$ edges then $a(n_1)=n_2$.

The first two inequalities follow from the definitions and the third follows from the first two.

$${\bf 1)} \ \ \ \ \ {\vert a(m+i)-a(m)\vert-\epsilon_2\over \lambda_2}-2K\leq d(\beta_2(a(m+i)),\beta_2(a(m)))-2K\leq$$ $$ d(\beta_1(m+i),\beta_1(m))\leq d(\beta_2(a(m+i)),\beta_2(a(m)))+2K\leq \vert a(m+i)-a(m)\vert+2K$$

$$ {\bf 2)} \ \ \ \ \ \ \ \ {i-\epsilon _1\over \lambda_1}\leq d(\beta_1(m+i),\beta_1(m))\leq i \ \ \ \ \ \ \ \ \ \ \ \ \ \ \ \ \ \ \ \ \ \ \ \ \ \ \ \ \ \ \ \ \ \ \ \ \ \ \ \ \ \ \ $$

$${\bf 3)}\ \ \ \ \ \ \ \ \ \  {i-\epsilon _1\over \lambda_1}-2K\leq \vert a(m+i)-a(m)\vert \leq \ \ \ \ \ \ \ \ \ \ \ \ \ \ \ \ \ \ \ \ \ \ \ \ \ \ \ \ \ \ \ \ \ \ \ \ \ $$ $$\lambda_2 (d(\beta_1(m+i),\beta_1(m))+2K)+\epsilon _2\leq (i+2K)\lambda_2+\epsilon _2$$

The inequality  $\vert a(i+1)-a(i) \vert \leq \lambda_2(2K+1)+\epsilon _2$ implies if $k$ is between $a(i)$ and $a(i+1)$ for some $i$ then $\beta_2(k)$ is within $\lambda_2(2K+1)+\epsilon _2+K$ of $\beta_1(i)$. In the case $\beta_1$ and $\beta _2$ are finite, the condition that terminal points are within $K$ of one another (so that $a(n_1)=n_2)$ implies that every integer in the domain of $\beta_2$ is between $a(i)$ and $a(i+1)$ for some $i$ and this case is finished. 
If $\beta_1$ and $\beta_2$ are rays then $a(i)$ is non-negative and equation 3) (with $m=0$) implies $a(i)$ is arbitrarily large for large $i$ 
and again every integer in the domain of $\beta_2$ is between $a(i)$ and $a(i+1)$ for some $i$. 
If $\beta_1$ and $\beta_2$ are bi-infinite, then the $a(i)$ may be positive or negative and (again by 3)) for large $\vert i\vert$, $\vert a_i\vert$ is large, and $lim_{i\to +\infty} a(i)=\pm \infty$ and $lim_{i\to -\infty} a(i)=\pm \infty$. It remains to see $lim_{i\to +\infty} a(i)\ne lim_{i\to -\infty}a(i)$. Equality is impossible, since otherwise, for every large positive integer $i$, $a(-i)$ would be between $a(j)$ and $a(j+1)$ for some (depending on $i$) large positive integer $j$. But equation 3) implies $a(j)$ and $a(j+1)$ are relatively close and $a(-i)$ and $a(j)$ are far apart.
\end{proof}

\begin{proposition}\label{BBN}
Suppose $\beta$ is a quasi-geodesic edge path ray in $\Lambda$ and $\beta$ is tracked by a geodesic, then $\beta$ has bounded bracket number.
\end{proposition}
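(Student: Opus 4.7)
The plan is to reduce to showing a uniform distance bound: exhibit a constant $M$, depending only on $K$, the quasi-geodesic constants $(\lambda,\epsilon)$ of $\beta$, and the Coxeter constant $P(1)$, such that whenever a wall $Q$ brackets a vertex $t$ of $\beta$, one has $d(t,\bar Q)\leq M$. Once this is proved, the conclusion follows because the ball of radius $M$ about any vertex of $\Lambda$ contains only boundedly many edges (a bound depending only on $M$ and $|S|$), so only boundedly many walls meet it; this yields a uniform upper bound for $B(t,\beta)$ and hence for $B(\beta)$.

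Let $\alpha$ be a geodesic that $K$-tracks $\beta$; by Lemma \ref{doubletrack}, $\alpha$ is also tracked by $\beta$. Fix a vertex $t=\beta(m)$ and a wall $Q$ bracketing $t$, and select the innermost $Q$-edges of $\beta$ about $t$: $e=(\beta(p),\beta(p+1))$ with $p+1\leq m$ maximal, and $f=(\beta(q),\beta(q+1))$ with $q\geq m$ minimal. Then $\beta(p+1)$ and $\beta(q)$ both lie on $\bar Q$, so their $\alpha$-trackers $u_{p+1}$ and $u_q$ lie within $K$ of $\bar Q$. Apply Lemma \ref{near} to the geodesic subsegment $\alpha'$ of $\alpha$ from $u_{p+1}$ to $u_q$ with $A=K$: every vertex of $\alpha'$ lies within $C:=2K(2P(1)+1)+P(1)$ of $\bar Q$.

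The argument now splits on a threshold $T$ to be chosen in terms of $\lambda,\epsilon,K$. If $m-p-1\leq T$ or $q-m\leq T$, then $t$ is within $T$ edges along $\beta$ of either $\beta(p+1)$ or $\beta(q)$, both of which lie on $\bar Q$, so $d(t,\bar Q)\leq T$. Otherwise both gaps exceed $T$, and the quasi-geodesic lower bounds give $d(\beta(p+1),\beta(m))$ and $d(\beta(m),\beta(q))$ of order at least $T/\lambda$. The main technical step—and the step I expect to be the principal obstacle—is to ensure that, for $T$ sufficiently large, the $\alpha$-parameter $a(m)$ of the tracker of $t$ actually lies between $a(p+1)$ and $a(q)$, so that $\alpha(a(m))\in\alpha'$. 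This requires combining the quasi-geodesic lower bounds with the inequalities derived in the proof of Lemma \ref{doubletrack} (together with the fact that, in the ray case, $a(i)\to\infty$ and $a(0)=0$) to rule out $a(m)$ lying outside $[a(p+1),a(q)]$; the subtlety is that the tracking function $a$ is not automatically monotone, and one must exploit both the upper and lower bounds on $|a(i_2)-a(i_1)|$ to pin down the sign of $a(m)-a(p+1)$ and $a(q)-a(m)$.

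Granted this positioning, $\alpha(a(m))$ lies on $\alpha'$ hence within $C$ of $\bar Q$, and $t$ is within $K$ of $\alpha(a(m))$, so $d(t,\bar Q)\leq K+C$. Taking $M:=\max(T,K+C)$ gives the uniform distance bound and completes the proof.
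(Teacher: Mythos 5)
Your strategy tracks the paper's quite closely: both arguments reduce the problem to showing that the $\alpha$-tracker $\alpha(a(m))$ of the bracketed vertex lands between the $\alpha$-trackers of the innermost bracketing edges, so that Lemma~\ref{near} forces $\alpha(a(m))$ (hence $t$) to lie within a fixed distance of $\bar Q$; then a local-finiteness count of walls near a vertex bounds the bracket number. The packaging is slightly different --- you get a uniform distance bound $d(t,\bar Q)\leq M$ for \emph{every} bracketing wall via a threshold $T$, whereas the paper runs a pigeonhole count separating ``near'' from ``far'' bracketing walls --- but this is cosmetic.

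The genuine gap is exactly the step you flag as ``the principal obstacle'': you never establish that $a(p+1)<a(m)<a(q)$ when both gaps exceed $T$. The vague plan of ``combining the quasi-geodesic lower bounds with the inequalities from Lemma~\ref{doubletrack}'' does not by itself pin down the sign of $a(m)-a(p+1)$, since those inequalities only control $|a(i_2)-a(i_1)|$ and $a$ is not monotone. The paper fills this in with Claim~\ref{CL}: for any integer $n$ and any $K>\lambda(4L+1)+\epsilon$, one has $a(n+K)>a(n)$. The proof is a ``last counterexample'' argument rather than a direct estimate: if $a(n+K)\leq a(n)$, pick the largest $K_1>\lambda(4L+1)+\epsilon$ with $a(n+K_1)\leq a(n)$; then $a(n+K_1)\leq a(n)<a(n+K_1+1)$, so $\alpha(a(n))$ is sandwiched on the geodesic $\alpha$ between $\alpha(a(n+K_1))$ and $\alpha(a(n+K_1+1))$, which are at distance at most $2L+1$ from one another. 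Hence $d(\beta(n),\beta(n+K_1+1))\leq 4L+1$, contradicting the quasi-geodesic lower bound $(K_1+1-\epsilon)/\lambda>4L+1$. With that claim in hand (taking $T>\lambda(4L+1)+\epsilon$), your threshold step goes through and the rest of your argument is correct, so you should supply a proof of this coarse monotonicity rather than deferring it.
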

\begin{proof} 
Assume that $\beta$ is a $(\lambda,\epsilon)$-quasi-geodesic. 
Suppose $\alpha$ is a geodesic such that each vertex of $\beta$ is within $L$ of a vertex of $\alpha$. For each integer $n\geq 0$, choose an integer $a(n)$ such that $d(\beta(n),\alpha(a(n)))\leq L$. We assume that $a(0)=0$.

The next two equations follow from the definitions and the third follows from the first two.  
$$ a(n)-2L\leq d(\beta(n),\beta(0))\leq a(n)+2L$$
$$ {n-\epsilon \over \lambda} \leq d(\beta(n),\beta(0) )\leq n$$
$$ {n-\epsilon \over \lambda} -2L\leq a(n)\leq n+2L$$

\begin{claim} \label{CL}
Suppose $K$ is an integer larger than $\lambda(4L +1)+\epsilon$. Then for any integer $n$, $a(n+K)>a(n)$. 
\end{claim}

\begin{proof} Note that if $m\geq \lambda(n +4L)+\epsilon$ then $a(m)>n+2L>a(n)$.
So if $K>\lambda(4L+1)+\epsilon $, and $a(n+K)\leq a(n)$, then there is a last integer $K_1>\lambda(4L+1)+\epsilon $ such that $a(n+K_1)\leq a(n)$. Then (see figure 3) 
$$a(n+K_1+1)>a(n)\geq a(n+K_1)$$

\medskip


\hspace{1.in}\includegraphics[scale=.9]{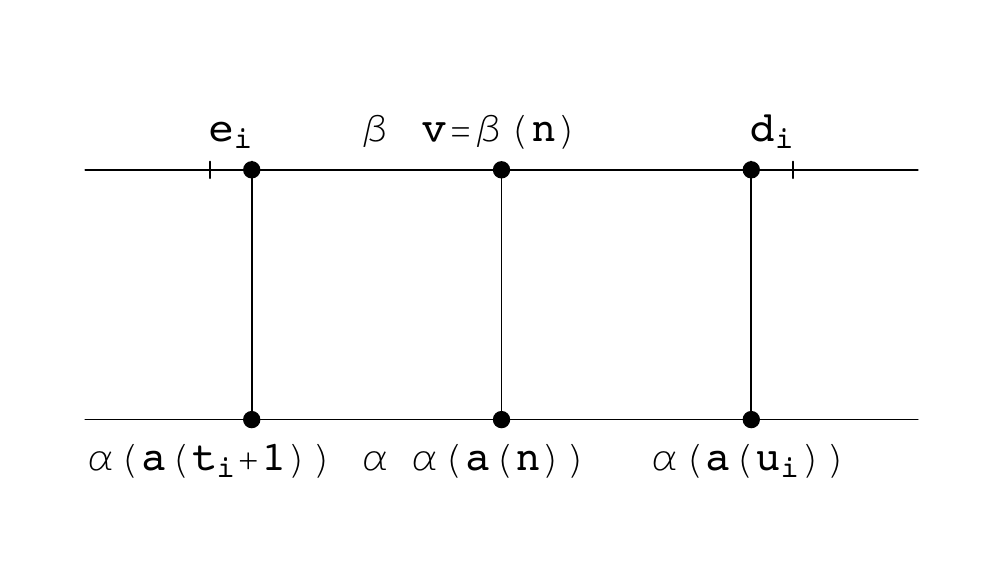}

\centerline{Figure 3}

\medskip

\medskip


Since $d(\beta(n+K_1),\beta(n+K_1+1))= 1$ for all $n$, and $d(\beta (i), \alpha(a(i)))\leq L$ for all $i$, we have 
$$d(\alpha(a(n+K_1)), \alpha(a(n+K_1+1)))\leq 2L+1$$
But as $\alpha (a(n))$ is between $\alpha (a(n+K_1))$ and $\alpha (a(n+K_1+1))$ on the geodesic $\alpha$, 
$$d(\alpha(a(n)), \alpha (a(n+K_1+1)))\leq 2L+1$$
Then $d(\beta (n),\beta(n+K_1+1))\leq 4L+1$. But 
$$d(\beta(n),\beta(n+K_1+1))\geq {1\over \lambda}(K_1+1-\epsilon)> 4L+1$$ 
the desired contradiction (so the claim is proved). 
\end{proof}

Now suppose $v\equiv \beta(n)$ is a vertex of $\beta$ with bracket number at least $2\lambda (4L+1)+2\epsilon +K$. Then (by the pigeon hole principal)  there are $K$ distinct walls, $Q_1,\ldots ,Q_K$ such that for each $i\in \{1,\ldots ,K\}$, there is an edge $e_i$ of $\beta$ preceding $v$ and an edge $d_i$ of $\beta$ following $v$ such that $e_i$ and $d_i$ belong to the wall $Q_i$, the subpath of $\beta$ between $e_i$ and $d_i$ does not cross $Q_i$, $e_i$ is not one of the $\lambda(4L +1)+\epsilon$ edges of $\beta$ immediately preceding $v$ and $d_i$ is not one of the $\lambda(4L+1)+\epsilon $ edges of $\beta$ immediately following $v$. I.e. $e_i=\beta ([t_i,t_{i}+1])$ where $t_{i}+1\leq n-\lambda(4L +1)-\epsilon$ and $d_i=\beta ([u_i,u_{i}+1])$ where $u_i\geq n+\lambda(4L +1)+\epsilon$.  (See figure 4.)

\medskip


\hspace{.5in}\includegraphics[scale=.9]{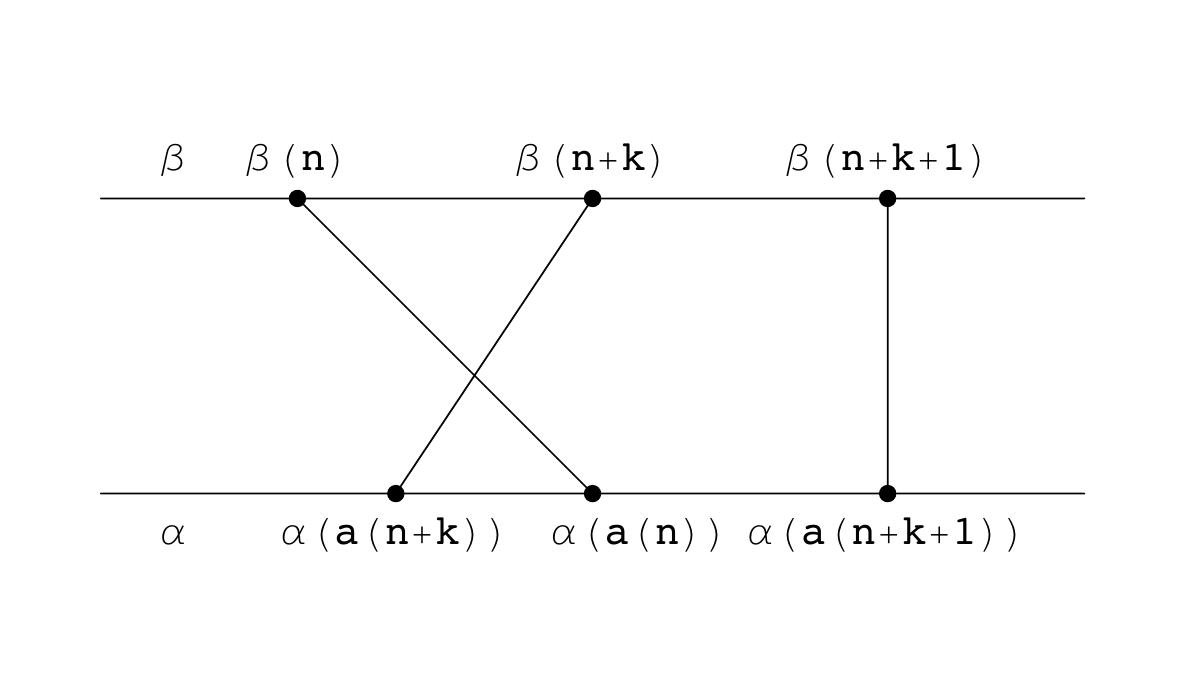}

\centerline{Figure 4}

\medskip

\medskip


By  claim \ref{CL}, $a(t_i+1)<a(n)<a(u_i)$. Hence, by lemma \ref{near}, $\alpha(a(n))$ is within $2L(2P(1)+1)+P(1)$ of the wall $Q_i$. For $x$ a vertex of $\Lambda$, let $C(k)$ be the number of  distinct walls that pass within $k$ of $x$. Note that $C$ is independent of vertex in $\Lambda$. Hence $K\leq C(2L(2P(1)+1)+P(1))$, bounding the bracket number of a vertex of $\beta$. 
\end{proof}

\section{Proof of Main Theorem}

In order to prove the main theorem, we need two results, one due to B. Brink and R. Howlett \cite{BrinkHowlett}, and a second, due to R. P. Dilworth \cite{Dilworth}.
 
\begin{theorem} \label{antichain}
(Brink-Howlett) Suppose $(W,S)$ is a finitely generated Coxeter system, and $\Lambda(W,S)$ is the Cayley graph of $W$ with respect to $S$. There is a bound $F_{(W,S)}$ on the number of mutually crossing walls of $\Lambda$.
\end{theorem}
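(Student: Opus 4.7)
The plan is to reduce the bound to a combinatorial finiteness theorem about the root system of $(W,S)$, which is the heart of \cite{BrinkHowlett}. First I would work in the standard geometric representation of $W$ on $V=\mathbb{R}^S$ equipped with the bilinear form $B$ given by $B(e_s,e_t)=-\cos(\pi/m(s,t))$, with the convention $B(e_s,e_t)=-1$ when $m(s,t)=\infty$. Reflections in $W$ are in bijection with pairs $\pm\alpha$ of positive roots $\alpha\in\Phi^+\subset V$, and a standard rank-two computation shows that two walls $Q_\alpha$, $Q_\beta$ cross in $\Lambda$ if and only if the dihedral group $\langle r_\alpha,r_\beta\rangle$ is finite, equivalently iff $|B(\alpha,\beta)|<1$.

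Second, I would introduce Brink and Howlett's dominance partial order on $\Phi^+$: write $\alpha\succeq\beta$ iff for every $w\in W$ sending $\alpha$ to a negative root, $w$ also sends $\beta$ to a negative root. A short verification (using the reflection formula $r_\alpha(\beta)=\beta-2B(\alpha,\beta)\alpha$) shows that if $\alpha$ strictly dominates $\beta$ then $B(\alpha,\beta)\geq 1$, so the walls $Q_\alpha$ and $Q_\beta$ are parallel. Consequently, any family of mutually crossing walls in $\Lambda$ gives rise to an antichain in the dominance order, and it suffices to bound the width of this poset.

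The key input, which is the main theorem of \cite{BrinkHowlett}, is that the set of \emph{elementary roots} (the positive roots which dominate no positive root but themselves) is finite, and from this one extracts a uniform bound $F_{(W,S)}$ on the cardinality of any dominance antichain; one then takes this bound as the constant in the statement. Brink and Howlett establish finiteness of the elementary roots by tracking how the simple reflections $s\in S$ act on dominance classes in $V$ and by carefully bounding the values of $B(\alpha,\beta)$ that can occur among pairs involved in elementary configurations.

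The main obstacle is this finiteness of the elementary roots: it is the genuine combinatorial substance of \cite{BrinkHowlett}, and I see no shortcut that avoids their analysis in the generality of arbitrary finitely generated Coxeter systems. Once that theorem is in hand, however, the reduction via dominance and the Tits form is essentially formal and produces the desired bound.
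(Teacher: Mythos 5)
The paper does not prove this result at all: it is stated as a black-box citation to Brink and Howlett, so there is no ``paper's proof'' to compare against. Your proposal is therefore best read as a sketch of how the theorem is obtained inside \cite{BrinkHowlett}, which is reasonable in spirit, but one step is over-claimed. You reduce the statement to the assertion that every \emph{dominance antichain} in $\Phi^+$ has bounded size and say this follows ``essentially formally'' from the finiteness of the set of elementary roots. That inference is not formal: for an arbitrary poset, having finitely many minimal elements places no bound whatsoever on the width (take two minimal elements with an infinite antichain sitting above both). The antichain bound requires additional structural input about the dominance order, and this is precisely the content that your sketch glosses over. In fact you are also proving strictly more than needed, since a dominance antichain can contain parallel, non-crossing walls (when $B(\alpha,\beta)\le -1$ the roots are incomparable yet the walls are parallel, e.g.\ the two simple walls in $D_\infty$), so bounding antichains is a stronger goal than bounding pairwise-crossing families.

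The parts of your reduction that are genuinely formal are correct: walls $Q_\alpha,Q_\beta$ cross iff $|B(\alpha,\beta)|<1$, and dominance forces $B(\alpha,\beta)\ge 1$, so a mutually crossing family is a dominance antichain. A cleaner way to finish, still resting on Brink--Howlett, is to invoke their result that the set of values $\{B(\alpha,\beta):\alpha,\beta\in\Phi^+,\ |B(\alpha,\beta)|<1\}$ is finite, hence bounded away from $\pm 1$. A family of pairwise crossing walls then corresponds to unit vectors in the rank-$|S|$ space $V$ whose pairwise inner products lie in a fixed finite subset of $(-1,1)$, and a standard spherical-cap packing bound in a fixed-dimensional space bounds the size of such a family. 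That argument actually bounds the number of mutually crossing walls (which is what the theorem asserts and what the paper needs for Proposition~\ref{partition}), without detouring through the width of the full dominance poset. So: correct identification of the source and of the key finiteness theorem, but the phrase ``essentially formal'' hides the genuine additional work, and the dominance-antichain route is both stronger than necessary and unjustified as written.
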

  
Dilworth's theorem requires several definitions. If $A$ is a partially ordered set (a set with reflexive, antisymmetric and transitive binary relation $\leq$ on $A$), then any two elements $x$ and $y$ are {\it comparable} if either $x\leq y$ or $y\leq x$. Otherwise they are in {\it incomparable}. A subset $C$ of $A$ is a {\it chain} when every pair of points in $C$ is a comparable pair. A subset $B$ of $A$ is called an {\it anitchain} when every pair of points in $B$ is an incomparable pair. The number of points in a maximal antichain is called the {\it width} of $A$. 

\begin{theorem}
(Dilworth) If $A$ is a partially ordered set of width $w$, then $A$ can be partitioned into $w$ chains.
\end{theorem}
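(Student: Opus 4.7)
The plan is to argue by strong induction on $|A|$, first treating the case where $A$ is finite; the infinite case follows from a standard compactness argument reducing to finite subposets. The easy direction of the corresponding equality ``minimum number of chains in a partition equals $w$'' is immediate: any chain meets an antichain in at most one point, so a partition into fewer than $w$ chains would fail to cover some antichain of size $w$ by the pigeonhole principle. Hence the substantive task is to exhibit a covering of $A$ by exactly $w$ chains.

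For the inductive step I would choose a chain $C \subseteq A$ that is \emph{maximal with respect to inclusion} (not merely of maximum length) and consider the residual poset $A' = A \setminus C$ with the induced order. If the width of $A'$ is at most $w-1$, induction gives a partition of $A'$ into $w-1$ chains, and adjoining $C$ finishes the job. Otherwise $A'$ still has width $w$, witnessed by an antichain $\{a_1,\ldots,a_w\} \subseteq A'$. Form the down-set $L = \{x \in A : x \le a_i \text{ for some } i\}$ and the up-set $U = \{x \in A : x \ge a_i \text{ for some } i\}$. Since $\{a_1,\ldots,a_w\}$ is still a maximum antichain in the whole of $A$, every element of $A$ is comparable to some $a_i$ (otherwise the antichain could be enlarged), so $L \cup U = A$; and $L \cap U = \{a_1,\ldots,a_w\}$ because $a_j \le x \le a_i$ forces $a_i = a_j$ by incomparability.

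The maximality of $C$ enters here to guarantee that both $L$ and $U$ are \emph{proper} subsets of $A$, so that the inductive hypothesis is actually available for each. If the top element of $C$ lay in $L$, it would be $\le$ some $a_i$, and then every element of $C$ would be below $a_i$, making $C \cup \{a_i\}$ a strictly larger chain and contradicting maximality of $C$; the symmetric argument rules out the bottom of $C$ lying in $U$. Apply induction to partition $L$ into $w$ chains and $U$ into $w$ chains; since $\{a_1,\ldots,a_w\}$ is an antichain of size $w$ lying inside each of $L$ and $U$, each of the $w$ chains in each partition contains exactly one $a_i$. Concatenating, for each $i$, the chain of $L$ through $a_i$ with the chain of $U$ through $a_i$ produces $w$ chains in $A$ that together partition $A$.

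The main obstacle is precisely the properness of $L$ and $U$, which is the whole reason for starting from an inclusion-maximal chain rather than from a single element or from an arbitrary maximum antichain; without this refinement the induction can fail to reduce $|A|$ (for instance, if one started from the antichain and both $L$ and $U$ happened to equal $A$). Once properness is secured, the concatenation along the $a_i$'s is essentially bookkeeping and the induction closes.
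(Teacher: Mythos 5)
The paper itself offers no proof of Dilworth's theorem; it is stated as a classical result with a citation to Dilworth's original 1950 paper, so there is no internal argument to compare your proposal against. Your proof is a correct, self-contained version of a standard argument (the one, usually credited to Perles or Tverberg, that removes an inclusion-maximal chain rather than a single maximal element). The logic is sound at every step: the case split on whether the residual width drops; the observation that a maximum antichain of $A$ surviving in $A' = A \setminus C$ forces $L \cup U = A$ and $L \cap U = \{a_1,\ldots,a_w\}$; the use of inclusion-maximality of $C$ to show the top (resp.\ bottom) of $C$ avoids $L$ (resp.\ $U$), giving properness; and the pigeonhole argument that each of the $w$ chains partitioning $L$ (resp.\ $U$) contains exactly one $a_i$, which is what makes the concatenation along the $a_i$'s well-defined and yields disjoint chains covering $A$. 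One point worth making explicit, though you clearly have it in mind, is that every element of the chain of $L$ through $a_i$ is in fact $\le a_i$ (if $x$ in that chain satisfied $x > a_i$, then $x \le a_j$ for some $j$ would force $a_i < a_j$, violating the antichain), and dually for $U$; this is what guarantees the concatenations are chains. For the purposes of this paper only the finite case is actually used, since Dilworth is applied to the set of walls separating two vertices, which has cardinality $d(x,y) < \infty$, so the compactness reduction you sketch for the infinite case is not needed here, but it is correct as stated.
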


Suppose $x$ and $y$ are vertices of $\Lambda(W,S)$ and ${\mathcal W}_{(x,y)}$ is the set of walls that separate $x$ and $y$. We partially order ${\mathcal W}_{(x,y)}$ by saying $P\leq Q$ if either $P=Q$, or $P$ and $Q$ are parallel and $P$ separates $x$ from $Q$. Note that $P$ and $Q$ are parallel walls of ${\mathcal W}_{(x,y)}$, iff they are comparable. Hence $P$ and $Q$ are incomparable iff they cross. By proposition \ref{antichain}, the width of ${\mathcal W}_{(x,y)}$ is $F_{(W,S)}$. Applying Dilworth's theorem we have:

\begin{proposition}\label{partition}
Suppose $(W,S)$ is a finitely generated Coxeter system, and $\Lambda(W,S)$ is the Cayley graph of $W$ with respect to $S$. For any vertices $x$ and $y$ of $\Lambda$ the walls separating $x$ and $y$ can be partitioned into at most $F_{(W,S)}$ chains (where any two walls in the same chain are parallel).
\end{proposition}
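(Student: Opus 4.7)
The proof plan is essentially to formalize the setup that precedes the statement: check that the given relation really is a partial order on $\mathcal{W}_{(x,y)}$, identify antichains with sets of pairwise crossing walls, and then quote Brink--Howlett and Dilworth.

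First I would verify the three partial order axioms for the relation ``$P\leq Q$ iff $P=Q$, or $P,Q$ are parallel and $P$ separates $x$ from $Q$'' on $\mathcal{W}_{(x,y)}$. Reflexivity is by definition. For antisymmetry and transitivity, the key point is that if $P$ and $Q$ are parallel then each lies entirely on one side of the other, so the sides of $P$ and $Q$ are compatibly ordered in the direction from $x$ to $y$. Concretely, I would pick any geodesic $\gamma$ from $x$ to $y$; since each wall in $\mathcal{W}_{(x,y)}$ is crossed exactly once by $\gamma$ (as noted in the discussion before Lemma \ref{easy}), the relation $P\leq Q$ is equivalent to saying that $\gamma$ crosses $P$ no later than $Q$. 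From this reformulation antisymmetry and transitivity are immediate.

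Next I would establish the equivalence ``comparable iff parallel'' for walls in $\mathcal{W}_{(x,y)}$. The forward direction is built into the definition of $\leq$. For the converse, suppose $P$ and $Q$ are parallel and distinct, both separating $x$ from $y$. Walking along $\gamma$ from $x$ to $y$, one of them, say $P$, is crossed first; after this crossing the path lies on the opposite side of $P$ from $x$, and it must reach $y$ by crossing $Q$, so $Q$ lies on the side of $P$ opposite $x$. Hence $P$ separates $x$ from $Q$, i.e.\ $P\leq Q$. Therefore two walls in $\mathcal{W}_{(x,y)}$ are incomparable precisely when they are not parallel, i.e.\ when they cross.

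With these observations the rest is automatic. An antichain in $(\mathcal{W}_{(x,y)},\leq)$ is a set of pairwise crossing walls, so by Theorem \ref{antichain} (Brink--Howlett) every antichain has size at most $F_{(W,S)}$. Thus the width of $\mathcal{W}_{(x,y)}$ is at most $F_{(W,S)}$, and Dilworth's theorem produces a partition of $\mathcal{W}_{(x,y)}$ into at most $F_{(W,S)}$ chains; any two walls in a common chain are comparable, hence parallel, which is exactly the conclusion.

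The only nontrivial step is the comparability verification for parallel walls separating $x$ and $y$; once one observes that a geodesic from $x$ to $y$ linearly orders the walls it crosses, and that every wall of $\mathcal{W}_{(x,y)}$ is crossed exactly once by such a geodesic, the partial order structure and its agreement with the parallel/crossing dichotomy fall out immediately, and no further obstacle remains before invoking the two cited theorems.
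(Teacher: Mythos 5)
Your proposal follows the paper's approach exactly: order $\mathcal{W}_{(x,y)}$ by the separation relation, observe that antichains are precisely sets of pairwise crossing walls so that Brink--Howlett bounds the width by $F_{(W,S)}$, and conclude via Dilworth. One small caution: your reformulation of $P\leq Q$ as ``a geodesic $\gamma$ crosses $P$ no later than $Q$'' quietly drops the requirement that $P$ and $Q$ be parallel, so transitivity is not literally immediate from it --- one must still check that if $P,Q$ are parallel and $Q,R$ are parallel (with $P$, $Q$, $R$ crossed in that order by $\gamma$), then $P$ and $R$ are parallel; this follows from the nesting of half-spaces ($P^-\subset Q^-\subset R^-$) and is left unstated in the paper as well, so it does not affect the validity of your argument.
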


Say a path is {\it geodesic with respect to a set of walls} if the path crosses each wall of the set either $0$ or $1$ times. The following lemma is clear.

\begin{lemma} \label{clear}
Suppose $\alpha$ is an edge path in $\Lambda$ and $\alpha$ is geodesic with respect to the set of parallel walls $\mathcal Q$. If a subpath of $\alpha$ is replaced by a geodesic edge path, then the resulting edge path is geodesic with respect to $\mathcal Q$.
\end{lemma}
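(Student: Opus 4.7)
The plan is a wall-by-wall parity argument. Decompose $\alpha=(\sigma_1,\tau,\sigma_2)$, where $\tau$ is the subpath being replaced, and set $\alpha'=(\sigma_1,\tau',\sigma_2)$, where $\tau'$ is the geodesic edge path with the same endpoints as $\tau$. Since $\sigma_1$ and $\sigma_2$ are unchanged, it suffices to show that for each wall $Q\in\mathcal Q$, the subpaths $\tau$ and $\tau'$ cross $Q$ the same number of times.

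To do this I would use the fact (already recorded in the paragraph preceding Lemma \ref{easy}) that a geodesic edge path crosses each wall at most once, and crosses a given wall exactly once iff that wall separates its endpoints. Applied to $\tau'$: it crosses $Q$ exactly once if $Q$ separates the endpoints of $\tau$ (equivalently, of $\tau'$), and zero times otherwise. Equivalently, the number of times $\tau'$ crosses $Q$ equals the parity of the number of times $\tau$ crosses $Q$.

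Now, the hypothesis that $\alpha$ is geodesic with respect to $\mathcal Q$ means that $\alpha$ crosses every $Q\in\mathcal Q$ at most once, and so in particular the subpath $\tau$ crosses $Q$ at most once. Hence the count of $\tau$-crossings of $Q$ already equals its parity, either $0$ or $1$; by the previous paragraph $\tau'$ has the same count. Therefore $\alpha'$ crosses $Q$ exactly as often as $\alpha$ does, which is $0$ or $1$, showing $\alpha'$ is geodesic with respect to $\mathcal Q$.

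I do not anticipate any real obstacle here; the lemma is essentially a bookkeeping statement, which is why the authors themselves label it ``clear''. It is worth noting that the parallelism hypothesis on $\mathcal Q$ plays no role in the above argument—it works verbatim for any set of walls of $\Lambda$—so presumably the hypothesis is included only because that is the form in which the lemma will be applied.
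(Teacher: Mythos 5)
Your proof is correct. The paper gives no argument at all (it labels the lemma ``clear''), and the parity bookkeeping you supply is exactly the intended reasoning: since $\tau$ is a subpath of $\alpha$, it crosses each $Q\in\mathcal Q$ at most once, so the crossing count for $\tau$ equals its parity, which equals the crossing count for the geodesic replacement $\tau'$; the $\sigma_i$ contributions are unchanged. Your side remark is also accurate: the argument is wall-by-wall and never uses that the walls in $\mathcal Q$ are mutually parallel, so the hypothesis is present only because that is the form in which the lemma is invoked in the proof of Theorem~\ref{trackqg}.
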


\begin{theorem} \label{trackqg}
Suppose $(W,S)$ is a finitely generated Coxeter system, $\alpha$ is a $(\lambda,\epsilon)$-quasi-geodesic edge path from $a$ to $b$ in the Cayley graph $\Lambda (W,S)$. Then there is an integer $K$, depending only on $\Lambda$, $\lambda$, $\epsilon$ and the bracket number $B$ of $\alpha$, and a $\Lambda$-geodesic $\beta$ connecting $a$ and $b$ such that $\alpha$ is $K$-tracked by $\beta$. 
\end{theorem}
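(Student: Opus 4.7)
The plan is to combine the approximation result of Proposition \ref{rightside} with an iterative ``straightening'' procedure guided by the chain decomposition of Proposition \ref{partition}. First, I would apply Proposition \ref{rightside} to $\alpha$ to obtain an $L$-approximation $\beta$ with $L\leq (2P(1)+1)B(\alpha)$ whose vertices all lie on geodesics from $a$ to $b$. The first lemma of Section 3 then bounds $B(\beta)$ in terms of $B(\alpha)$, $L$, and constants of $\Lambda$, while Lemma \ref{easy}(i) ensures every edge of $\beta$ lies in a wall separating $a$ and $b$. Since $\alpha$ is $L$-tracked by $\beta$ by construction, it suffices to produce a geodesic $\gamma$ from $a$ to $b$ that tracks $\beta$ within a constant depending only on the allowed data.

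To construct $\gamma$, I would invoke Proposition \ref{partition} to partition the walls separating $a$ and $b$ into $F = F_{(W,S)}$ chains $C_1,\ldots,C_F$ of mutually parallel walls, and then process the chains one at a time. At step $k$, I modify the current path so that each wall of $C_k$ is crossed exactly once, while preserving the single-crossing property already established for $C_1,\ldots,C_{k-1}$. Each modification is an ``innermost-pair deletion'': locate two edges in the same wall $Q\in C_k$ with no intermediate $Q$-edge, and reflect the enclosed (geodesic, $Q$-disjoint) subpath across $Q$, in the spirit of the condition $(\ast)$ used in Lemma \ref{near}. Lemma \ref{clear} guarantees that replacing a subpath by its reflected counterpart, interpreted as replacement by a geodesic, does not violate the single-crossing property for previously-processed chains. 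After all $F$ steps, every wall separating $a$ and $b$ is crossed exactly once, so by Lemma \ref{easy} the resulting path is a geodesic $\gamma$ from $a$ to $b$.

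The main obstacle is bounding how far each vertex of $\beta$ is displaced through this sequence of reflections. Each individual reflection moves any vertex in the reflected subpath by at most $2P(1)+1$ by Lemma \ref{reflect}, so the problem reduces to bounding, for each vertex $w$, how many reflections can involve $w$. For a fixed chain $C_k$, the reflections enclosing $w$ correspond to walls of $C_k$ that bracket $w$ in the current intermediate path, and a key sub-lemma to verify is that the bracket number of the intermediate paths remains controlled in terms of $B(\beta)$ and $F$: reflections across a single wall $Q$ only introduce new duplications within the wall $Q$, and the earlier chain-by-chain progress ensures such duplications are successively eliminated. Accumulating at most $2P(1)+1$ of displacement per involved reflection and summing over the at most $F$ chains yields a vertex displacement bound $K_0$ depending only on $B(\beta)$, $F$, and $P(1)$.

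Putting the pieces together, every vertex of $\beta$ lies within $K_0$ of some vertex of $\gamma$, and every vertex of $\alpha$ lies within $L$ of some vertex of $\beta$, so $\alpha$ is $(L+K_0)$-tracked by the geodesic $\gamma$. Since $L$ and $K_0$ depend only on $\Lambda$, $\lambda$, $\epsilon$, and $B=B(\alpha)$, this gives the desired constant $K$. The hard part is the bracket-number bookkeeping in the straightening step; everything else is an assembly of Propositions \ref{rightside}, \ref{partition} and Lemmas \ref{reflect}, \ref{easy}, \ref{clear}, together with the preservation-of-tracking observations embodied in Lemma \ref{doubletrack}.
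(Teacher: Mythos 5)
Your outline correctly identifies the right ingredients (Propositions \ref{rightside} and \ref{partition}, Lemmas \ref{reflect}, \ref{easy}, \ref{clear}) and the general strategy of processing one chain of parallel walls at a time, but it diverges from the paper's argument at exactly the point you flag as ``the hard part,'' and the gap there is real. After applying Proposition \ref{rightside}, the path $\beta$ has all its vertices on $a$--$b$ geodesics, but $\beta$ itself is not geodesic between arbitrary pairs of its vertices. So when you locate an innermost pair of edges $e,d$ in the same wall $Q$, the enclosed subpath between them is in general \emph{not} geodesic, and Lemma \ref{reflect} — which gives the $2P(1)+1$ displacement bound — does not apply; it explicitly hypothesizes a geodesic subpath between the two $Q$-edges. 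Without that, a single reflection across $Q$ can move interior vertices arbitrarily far, and your per-vertex displacement estimate ``at most $2P(1)+1$ per reflection'' has no basis. Moreover your sub-lemma that the number of reflections affecting a given vertex is controlled by $B(\beta)$ and $F$ is asserted, not argued, and it is precisely where a naive iterative scheme can blow up.

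The paper avoids this by a genuinely different mechanism: a \emph{double induction}. The outer induction is on the number $A\leq F$ of chains of walls separating the endpoints, and the inner induction is on the number of chains with respect to which the current path fails to be geodesic. When the paper locates the first repeated wall $Q$ in chain ${\mathcal Q}_K$, with first occurrence $e_i$ and last occurrence $e_j$, the crucial observation is that $a_i$ and $a_j$ are not separated by any wall of ${\mathcal Q}_K$, so the walls separating $a_i$ from $a_j$ lie in strictly fewer than $M$ chains. It then replaces the subpath $\alpha_{i,j}$ not by its reflection but by a geodesic $\beta_{i,j}$ which \emph{tracks} $\alpha_{i,j}$, obtained from the outer inductive hypothesis. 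This keeps the displacement bounded at each step without ever invoking Lemma \ref{reflect} on a non-geodesic subpath, and it is the recursive replacement — not a bound on the number of single reflections — that terminates the process and controls the constant. To repair your proposal along its original lines you would essentially have to rediscover this recursion: replacing the innermost enclosed subpath by a tracking geodesic rather than reflecting it, and running the argument by induction on the number of chains so the replacement is legitimate.
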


\begin{proof} The proof is a double induction argument. By proposition \ref{partition}, the walls separating $a$ and $b$ can be partitioned into at most $F$ sets ${\mathcal Q}_1,\ldots ,{\mathcal Q}_A$, where two walls in the same set are parallel. The ``outside" induction is on the number $A(\leq F)$ of sets of walls separating $a$ and $b$. The fact that $A$ is bounded by $F$ is critical to the argument that follows. Note that if $A=1$ then all walls separating $a$ and $b$ are parallel. In this case, the walls separating $a$ and $b$ are ordered as $Q_1,\ldots, Q_m$ where for $i<j<k$, $Q_j$ separates $Q_i$ from $Q_k$. Hence, there is a unique, geodesic edge path $\beta$ connecting $a$ and $b$, and $\beta$ crosses $Q_1$, then $Q_2$, $\ldots$. By proposition \ref{rightside}, the path $\alpha$ is approximated by a path $\alpha'$, such that each vertex of $\alpha'$ is on a geodesic connecting $a$ and $b$. The path $\alpha'$ only crosses the walls separating $a$ and $b$ (see lemma \ref{easy})  and, in this case, is geodesic, modulo backtracking. Eliminating backtracking on $\alpha'$ produces $\beta$.  Each vertex of $\alpha'$ is a vertex of $\beta$ and the basis case is complete.

Assume the statement of the theorem is true if $A$, the number of sets of walls separating $a$ and $b$, is less than or equal to $M-1$. 
Suppose there are $M$ sets of walls (${\mathcal Q}_1,\ldots ,{\mathcal Q}_M$) separating $a$ and $b$. 
By proposition \ref{rightside} we may assume every vertex of $\alpha$ is on a geodesic connecting $a$ and $b$, so that $\alpha$ only crosses walls separating $a$ and $b$ and $\alpha$ 
crosses each such wall an odd number of times.
The second induction is on $N(\leq M)$, the number of sets of walls, ${\mathcal Q}_i$, such that $\alpha$ is not geodesic with respect to ${\mathcal Q}_i$. If $N=0$, then $\alpha$ is geodesic. 
Assume the statement of the theorem is true for $N=K-1$ (when the number of sets of walls separating $a$ and $b$ is $\leq M$). 
Assume the ${\mathcal Q}_i$ are arranged so that $\alpha$ is geodesic with respect to ${\mathcal Q}_i$ for $K+1\leq i\leq M$. 
Write $\alpha$ as $(e_1,\ldots ,e_n)$ with consecutive vertices $a\equiv a_1, \ldots, a_n\equiv b$. Let $i$ be the first integer such that $e_i$ 
is an edge of a wall of ${\mathcal Q}_K$ and for some $j>i$, $e_j$ and $e_i$ are in the same wall $Q$. Now assume $j$ is the largest integer such that $e_j\in Q$. 
Since $\alpha$ crosses $Q$ an odd number of times, the path $\alpha _{i,j}\equiv (e_i,\ldots ,e_{j-1})$ (from $a_i$ to $a_j$) crosses $Q$ an even number of times. 
A geodesic $\beta_{i,j}$ connecting $a_i$ to $a_j$ does not cross $Q$. Since all walls of ${\mathcal Q}_K$ are parallel to one another, $\beta_{i,j}$ does not cross a wall of ${\mathcal Q}_K$. Hence $a_i$ and $a_j$ are not separated by a wall of ${\mathcal Q}_K$. 
By proposition \ref{rightside}, $\alpha_{i,j}$ is close to $\alpha _{i,j}'$ a quasi-geodesic edge path connecting $a_i$ to $a_{j}$, such that each vertex of $\alpha_{i,j}'$ is on a geodesic 
connecting $a_i$ to $a_j$. By lemma \ref{easy}, each wall separating $a_i$ and $a_j$ also separates $a$ and $b$, and the number of sets of walls 
separating $a_i$ and $a_j$ is less than $M$. 
By (outside) induction, there is a geodesic $\beta _{i,j}$ connecting $a_i$ and $a_j$ which tracks $\alpha_{i,j}'$ and therefore tracks $\alpha_{i,j}$. 
Replace $\alpha_{ i,j}$ by $\beta _{i,j}$. The resulting path, $\alpha_1$ crosses $Q$ exactly once at $e_j$. The walls of ${\mathcal Q}_K$ are ordered as $Q_1, Q_2,\ldots$ so that if $i<j$, then $Q_i$ separates $a$ from $Q_j$, and $Q_j$ separates $Q_i$ from $b$.  A wall of ${\mathcal Q}_K$ preceding $Q$ in this ordering is not crossed by $\alpha_1$ after $e_j$. Hence if ${\mathcal Q}\subset {\mathcal Q}_K$ is the set of walls of ${\mathcal Q}_K$ preceding $Q$ and including $Q$, then $\alpha_1$ is geodesic with respect to ${\mathcal Q}$ and (by lemma \ref{clear}), $\alpha_1$ is geodesic with respect to each set ${\mathcal Q}_i$ for $i>K$. Suppose $e_k$ is the first edge of $\alpha_1$ such that $e_k$ is an edge of a wall $Q$ of ${\mathcal Q}_K$, and for some $l>k$, $e_l\in Q$. Then $e_k$ follows $e_j$ on $\alpha_1$, and if we assume $e_l$ is the last edge of $\alpha_1$ in $Q$, then as above $(e_k,\ldots ,e_{l-1})$ can be replaced by a geodesic close to $(e_k,\ldots ,e_{l-1})$. Continuing, the resulting path is geodesic with respect to ${\mathcal Q}_K$ and by induction, the theorem follows. Note that the bound $F$ for $\Lambda$ (on the number of sets of parallel walls are necessary to partition the set of walls separating two points $a$ and $b$ of $\Lambda$), limits the total number of times the induction steps are carried out to arrive at a geodesic.  
\end{proof}  

\section{Consequences of the Main Theorem} 

\begin{corollary}\label{localfinite}
Suppose $(W,S)$ is a finitely generated Coxeter system, and $\Lambda(W,S)$ is the Cayley graph of $W$ with respect to $S$. Any infinite or bi-infinite $(\lambda,\epsilon)$-quasi-geodesic edge path $\alpha$ with bounded bracket number $B$ is $K'$-tracked by an edge path geodesic where $K'$ is a constant only depending on $\lambda$, $\epsilon$, $B$ and $S$. 
\end{corollary}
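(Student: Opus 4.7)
The plan is to bootstrap Theorem~\ref{trackqg} from finite quasi-geodesics to infinite ones by a compactness and diagonal extraction argument, exploiting the fact that $\Lambda(W,S)$ is locally finite since $S$ is finite.

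\emph{Ray case.} For each $n\in\mathbb{Z}^+$ the restriction $\alpha|_{[0,n]}$ is still a $(\lambda,\epsilon)$-quasi-geodesic, and its bracket number is at most $B$ (shortening the path can only decrease the number of walls straddling a given vertex). Theorem~\ref{trackqg} therefore produces a geodesic $\beta_n$ from $\alpha(0)$ to $\alpha(n)$ that $K$-tracks $\alpha|_{[0,n]}$, where $K$ depends only on $\lambda,\epsilon,B$ and $S$, and crucially not on $n$. Since $|\beta_n|=d(\alpha(0),\alpha(n))\geq (n-\epsilon)/\lambda\to\infty$ and $\Lambda$ is locally finite, a standard diagonal argument extracts a subsequence $\beta_{n_j}$ that converges edge-by-edge to a geodesic ray $\beta:[0,\infty)\to\Lambda$ based at $\alpha(0)$. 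To verify tracking, fix $i$ and pick for each large $j$ a vertex $w_j$ of $\beta_{n_j}$ within $K$ of $\alpha(i)$; then $d(\alpha(0),w_j)\leq i+K$, so the $w_j$'s lie in a finite ball, and after a further subsequence they equal a common vertex $w$, which by edge-by-edge convergence on that ball must lie on $\beta$.

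\emph{Bi-infinite case.} Apply Theorem~\ref{trackqg} to each $\alpha|_{[-n,n]}$ to obtain a geodesic $\beta_n$ from $\alpha(-n)$ to $\alpha(n)$ that $K$-tracks $\alpha|_{[-n,n]}$ with the same constant $K$. Choose a vertex $v_n$ of $\beta_n$ within $K$ of $\alpha(0)$; these all lie in a finite ball about $\alpha(0)$, so after passing to a subsequence $v_n=v$ is a fixed vertex, and we reparametrize $\beta_n$ so that $\beta_n(0)=v$. The quasi-geodesic estimate applied to each half of $\alpha$ forces the forward and backward lengths of $\beta_n$ past $v$ both to exceed $(n-\epsilon)/\lambda-K$, and so each tends to infinity; a diagonal extraction then yields a bi-infinite geodesic line $\beta$ through $v$. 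The tracking verification is identical to that of the ray case.

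The only delicate point, and the one I expect to be the main obstacle, is the bi-infinite case: one has to rule out that the diagonal limit collapses to a ray or a bounded arc. This is precisely why the base vertices $v_n$ are chosen uniformly close to the fixed point $\alpha(0)$ and why the two-sided quasi-geodesic estimate is invoked, to push both endpoints of $\beta_n$ arbitrarily far from $v$.
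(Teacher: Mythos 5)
Your proof is correct and follows the same strategy as the paper's: truncate $\alpha$, apply Theorem~\ref{trackqg} to the finite pieces to get geodesics $\beta_n$ with a uniform tracking constant $K$, stabilize a base vertex near $\alpha(0)$, and extract a limit geodesic by local finiteness. The one small difference is at the end: the paper observes that the limit $\beta$ is tracked by $\alpha$ and then invokes Lemma~\ref{doubletrack} to reverse the direction of tracking, whereas your direct verification (stabilizing the tracking vertex $w_j$ for each fixed $i$ and noting it must lie on $\beta$) is a touch cleaner and in fact yields the sharper constant $K$ itself.
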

\begin{proof}
The proof is a standard local finiteness argument in both the infinite and bi-infinite case. We give the bi-infinite case. Write $\alpha$ as the edge path $(\ldots, e_{-1}, e_0,e_1,\ldots )$ in $\Lambda$. Let $v_i$ be the initial point of $e_i$. By theorem \ref{trackqg}, there is a $\Lambda$-geodesic $\beta_n$ which $K$-tracks $\alpha_n\equiv (e_{-n},\dots ,e_n)$. Note that every vertex of $\beta_n$ is within $2K$ of a vertex of $\alpha$. For each positive integer $n$, some vertex $x_n$ of $\alpha_n$ is within $K$ of $v_0$. Hence there is an infinite number of $x_n$ that are equal. Of this infinite subcollection of $x_n$, infinitely many have the same pair of edges one preceding and one following $x_n$ on $\beta_n$, of this infinite collection of $x_n$ there is an infinite subcollection that have the same four edges - the two preceding and the two following $x_n$ being exactly the same. Continuing, we have a bi-infinite geodesic $\beta$ and each vertex of $\beta$ is within $2K$ of a vertex of $\alpha$. As $\alpha$ is a $(\lambda ,\epsilon)$-quasi-geodesic, lemma \ref {doubletrack} implies each point of $\alpha$ is within $\lambda(4K+1)+\epsilon+2K$ of $\beta$. 
\end{proof}

The next result follows directly from proposition \ref{BBN} and corollary \ref{localfinite}.

\begin{corollary}
Suppose $(W,S)$ is a finitely generated Coxeter system, and $\Lambda(W,S)$ is the Cayley graph of $W$ with respect to $S$. Then a quasi-geodesic edge path ray in $\Lambda$ is tracked by a geodesic iff it has bounded bracket number.
\end{corollary}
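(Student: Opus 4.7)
The plan is extremely short because, as the paper notes, this corollary is just the conjunction of two results already established. I would simply prove each direction by citing one of them.

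For the forward direction, I would assume $\alpha$ is a quasi-geodesic edge path ray in $\Lambda$ that is tracked by some geodesic, and apply Proposition \ref{BBN} verbatim to conclude that the bracket number $B(\alpha)$ is bounded. This direction requires no additional work since Proposition \ref{BBN} is stated precisely for quasi-geodesic rays.

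For the reverse direction, I would assume $\alpha$ is a $(\lambda,\epsilon)$-quasi-geodesic edge path ray with bounded bracket number $B$, and invoke Corollary \ref{localfinite} to obtain a geodesic $K'$-tracking $\alpha$, where $K'$ depends only on $\lambda$, $\epsilon$, $B$, and $S$. Corollary \ref{localfinite} is stated for both the infinite (ray) and bi-infinite (line) cases, so it applies immediately.

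The only remark worth making is that there is no genuine obstacle here: the substantive work has been done in Theorem \ref{trackqg} (bounded bracket number implies tracking by a geodesic on a finite initial segment), in the local-finiteness/diagonalization argument of Corollary \ref{localfinite} (upgrading finite tracking to infinite tracking), and in Proposition \ref{BBN} (tracking forces bracket numbers to be bounded, using the Parallel Wall theorem and Lemma \ref{near}). All that remains is to combine the two implications, which is a formality.
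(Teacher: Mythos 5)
Your proposal is correct and is exactly the paper's argument: the paper states the corollary ``follows directly from proposition \ref{BBN} and corollary \ref{localfinite},'' which is precisely the two-directional citation you give.
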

 
A metric space $(X,d)$ is a called a {\it geodesic metric
space} if every pair of points are joined by a geodesic.
It is {\it proper} if for any $x\in X$, the ball of radius $r$ about $X$ is compact for all positive numbers $r$. A group $W$ acts {\it geometrically} on a space if the action is properly discontinuous, co-compact and by isometries. 

Let $(X,d)$ be a proper complete geodesic
metric space.
If $\vartriangle abc$ is a geodesic triangle in $X$, then we consider
$\vartriangle\overline a\overline b\overline c$ in $\mathbb E^2$, a
triangle with the same side lengths, and call this a {\it comparison
triangle}.  Then we say $X$ satisfies the $CAT(0)$ {\it inequality}
if given $\vartriangle abc$ in $X$, then for any comparison triangle and
any two points $p,q$ on $\vartriangle abc$, the corresponding points
$\overline p,\overline q$ on the comparison triangle satisfy
$$d(p,q)\leq d(\overline p,\overline q)$$

If $(X,d)$ is a $CAT(0)$ space, then the following basic properties
hold:
\begin{enumerate}
\item  The distance function $d\colon X\times X\to\mathbb R$ is convex.
\item  $X$ has unique geodesic segments between points.
\item  $X$ is contractible.
\end{enumerate}
For details, see \cite{BridsonHaefliger}.

Suppose $(W,S)$ is a finitely generated Coxeter system, $\Lambda(W,S)$ is the Cayley graph of $W$ with respect to $S$ and $W$ acts geometrically on a CAT(0) space $X$. Define $\Lambda_x\subset X$ to have as vertices, the orbit $Wx$, and CAT(0) geodesic edge connecting $w_1x$ and $w_2x$ (for $w_i\in W$) when there is $s\in S$ such that $w_1s=w_2$.  There is a proper $W$-equivariant map $P_x:\Lambda \to \Lambda_x$ so that $P_x$ maps the identity vertex of $\Lambda$ to $x$.

Intuitively, the next result says that when a Coxeter group acts geometrically on a CAT(0) space, CAT(0) geodesics are tracked by Cayley graph geodesics. This result generalizes the right angled version of the same result in \cite{MRT}.

\begin{corollary} \label{CAT(0)}
Suppose $(W,S)$ is a finitely generated Coxeter system, and $\Lambda(W,S)$ is the Cayley graph of $W$ with respect to $S$ and $W$ acts geometrically on the proper CAT(0) space $X$. If $x$ is a point of $X$ not fixed by any element of $W$, and $\Lambda_x$ is the copy of $\Lambda$ at $x$, then any CAT(0) geodesic ray in $X$ is tracked by a Cayley graph geodesic in $\Lambda_x$. 
\end{corollary}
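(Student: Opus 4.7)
The plan is to replace the CAT(0) geodesic $\gamma$ by a Cayley graph approximation and then invoke Corollary \ref{localfinite}. Parametrize $\gamma$ by arc length and for each integer $n\ge 0$ choose $w_n\in W$ so that $w_nx$ is a closest orbit point to $\gamma(n)$. Since $W$ acts geometrically on $X$, the orbit map $w\mapsto wx$ is a quasi-isometry; hence $d_X(w_nx,\gamma(n))\le D$ for a constant $D$ coming from co-compactness, and $d_\Lambda(w_n,w_{n+1})\le N$ for a uniform $N$. Concatenating $\Lambda$-geodesics from $w_n$ to $w_{n+1}$ produces an edge path ray $\beta$ in $\Lambda$ which is a $(\lambda,\epsilon)$-quasi-geodesic (parameters depending only on $W$, $S$ and $X$) and whose image $P_x(\beta)\subset \Lambda_x$ tracks $\gamma$ within some uniform constant $D'$; the tracking is almost-monotone in the sense that the parameter of $\gamma$ closest to $P_x(\beta(k))$ is within a uniform additive error of a fixed linear function of $k$.

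The heart of the argument is to bound $B(\beta)$. Each reflection $r\in W$ has order two and acts on the complete CAT(0) space $X$ by isometries, so its fixed set $F_r\subset X$ is nonempty (Bruhat--Tits) and convex. If an edge $e$ of $\Lambda$ lies in the wall with reflection $r$, then $r$ swaps the endpoints of $e$ and so fixes the midpoint of the corresponding CAT(0) edge in $\Lambda_x$; thus this midpoint lies in $F_r$. Now suppose $v=\beta(k)$ satisfies $B(v,\beta)\ge K$, witnessed by distinct walls $Q_1,\dots,Q_K$ with reflections $r_i$ and bracketing edges $e_i$ (preceding $v$) and $e_i'$ (following $v$). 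The midpoints $m_i,m_i'\in F_{r_i}$ lie within a uniform constant of $\gamma$, at parameters $t_i^-$ and $t_i^+$ on $\gamma$; by the almost-monotone tracking these parameters sandwich $t(v)$ (the parameter of $\gamma$ nearest $vx$), with the exceptional case of a bracketing edge too close to $v$ on $\beta$ already forcing $F_{r_i}$ within a uniform distance of $vx$. Because $F_{r_i}$ is convex in the CAT(0) space $X$, the function $t\mapsto d_X(\gamma(t),F_{r_i})$ is convex, hence bounded at $t(v)$ by its values at $t_i^\pm$. Consequently $F_{r_i}$ lies within some uniform constant $C$ of $vx$.

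It remains to count reflections $r\in W$ with $d_X(vx,F_r)\le C$. Each such $r$ moves $vx$ by at most $2C$; since the action is proper and $x$ (and hence $vx$) has trivial stabilizer, only a uniformly finite number of group elements do so, and this bound is independent of $v$ by $W$-equivariance. Hence $K$ is bounded, so $\beta$ has bounded bracket number. Corollary \ref{localfinite} then supplies a $\Lambda$-geodesic $\alpha$ that $K'$-tracks $\beta$; its image $P_x(\alpha)$ is a Cayley graph geodesic in $\Lambda_x$ that tracks $\gamma$, as required. The main obstacle is the bracket number estimate: coupling convexity of $t\mapsto d_X(\gamma(t),F_r)$ with the quasi-geodesic tracking data of $\beta$ along $\gamma$, and in particular verifying the almost-monotone sandwiching of parameters (or dealing directly with the exceptional case where a bracketing edge sits too close to $v$ on $\beta$).
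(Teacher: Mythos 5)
Your proposal follows essentially the same route as the paper: co-compactly approximate $\gamma$ by a Cayley quasi-geodesic $\beta$, observe that the midpoints of edges in a wall $Q$ are fixed by the reflection $r_Q$ so that a bracketing wall has fixed points near $\gamma$ on both sides of the bracketed vertex, use CAT(0) convexity to drag the fixed set close to the vertex, and then bound the count by proper discontinuity before invoking Corollary \ref{localfinite}. The only real difference is cosmetic: the paper phrases the convexity step as the CAT(0) inequality for the quadrilateral $d', e', \alpha(j-1), \alpha(k+1)$ and gets the ordering $j\le i\le k$ for free from the concatenation $\beta=(\beta_0,\beta_1,\ldots)$, whereas you phrase it as convexity of $t\mapsto d_X(\gamma(t),F_r)$ and therefore have to argue separately for the ``almost-monotone'' sandwiching of parameters. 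The paper's bookkeeping makes that step automatic, so you could simplify by tying your $t_i^{\pm}$ to the integer indices $j,k$ of the segments $\beta_j,\beta_k$ containing the bracketing edges rather than reparametrizing via nearest points on $\gamma$; also, the Bruhat--Tits appeal is unnecessary since the edge midpoint already gives a fixed point of $r$.
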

\begin{proof}
For a given CAT(0) geodesic $\alpha$ we find a Cayley graph geodesic $\beta$ such that $P_x(\beta)$ tracks $\alpha$. It suffices to find $\lambda$, $\epsilon$, $K$ and $B$ such that any (finite) CAT(0) geodesic $\alpha$ is $K$-tracked by a Cayley $(\lambda ,\epsilon)$-quasi-geodesic with bracket number $\leq B$. Since $W$ acts co-compactly on $X$, there is an integer $K_1$ such that every point of $X$ is within $K_1$ of the orbit $Wx$. For each integer $0, 1,\ldots ,N$ such that $N$ is less that or equal to the length of $\alpha$, choose a point $v_ix$ of $Wx$ within $K_1$ of $\alpha (i)$. Let $\beta_i$ be a $\Lambda$-geodesic connecting $v_i$ to $v_{i+1}$ and $\beta$ be the $\Lambda$-edge path $(\beta_0,\beta_1,\ldots)$. Since the map $P_x:\Lambda \to \Lambda_x$ is quasi-isometric, there are numbers $\lambda$ and $\epsilon$ such that any such $\beta$ is a $(\lambda, \epsilon)$-quasi-geodesic in $\Lambda$ and  numbers $D_{\Lambda}$ and $D_X$ such that the length of any $\beta_i$ is less than or equal to $D_{\Lambda}$ (in $\Lambda$) and every point of such a $P_x(\beta_i)$ is within $D_X$ of $\alpha(i)$ (in $X$). Certainly every point of $\alpha$ is within $K\equiv K_1+1$ of $P_x(\beta)$.  

\medskip


\hspace{.5in}\includegraphics[scale=.9]{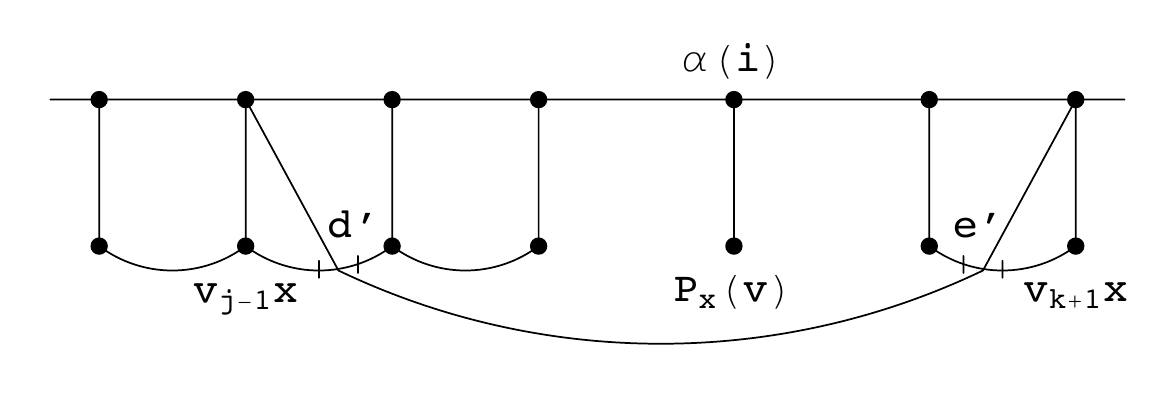}

\centerline{Figure 5}

\medskip

\medskip


Hence it suffices to bound the bracket number of such a $\beta$. If $v$ is a vertex of $\beta_i$ and $e$ and $d$ are edges of $\beta$ preceding and following $v$ respectively such that $e$ and $d$ belong to the same wall $Q$ of $\Lambda$, then $e$ is an edge of $\beta_j$ and $d$ is an edge of $\beta_k$ where $j\leq i\leq k$. The mid-points $e'$ of $P_x(e)$ and $d'$ of $P_x(d)$ are fixed (in  $\Lambda_x$ and $X$) by the reflection $r_Q\in W$ for the wall $Q$ (as are the mid-points of $e$ and $d$ in $\Lambda$). 
Hence the geodesic in $X$ connecting $d'$ and $e'$ is fixed by $r_Q$. Now, $d'$ (respectively $e'$) is within $D_X$ of $\alpha(j-1)$ (respectively $\alpha(k+1)$) and $P_x(v)$ is within $D_X$ of $\alpha(i)$. By the CAT(0) inequality for quadrilaterals (in particular for the quadrilateral determined by $d'$, $e'$, $\alpha (j-1)$, and $\alpha (k+1)$) $\alpha(i)$ is within $D_X$ of a point of the $X$-geodesic connecting $d'$ to $e'$ and hence $\alpha(i)$ is within $D_X$ of a fixed point of $r_Q$.  (See figure 5.)

Since the action of $W$ on $X$ is properly discontinuous, there is a bound $B$ on the number of reflections $r_Q$ such that $r_Q$ does not take the ball of radius $D_X$ centered at $v(i)\in X$ (equivalently centered at any $x\in X$) off of itself. Hence there cannot be more than $B$ walls bracketing the vertex $v$ of $\beta$. 
\end{proof}

\begin{remark}
Note that the above proof is valid even when $W$ does not act co-compactly on the CAT(0) space $X$, as long as the CAT(0) geodesic remains a bounded distance from $\Lambda_x$ for some $x$. 
\end{remark}

The following result answers a question posed by K. Ruane.

\begin{corollary}
Suppose $(W,S)$ is a finitely generated Coxeter group acting geometrically on the CAT(0) space $X$. For $x\in X$ let $\Lambda_x$ be the copy of the Cayley graph of $(W,S)$ in $X$, (with $W$-equivariant map $P_x:\Lambda(W,S)\to \Lambda_x$). Then for each subset $A\subset S$, the subgroup $\langle A\rangle$ is quasi-convex in $X$.  (I.e. $P_x(\langle A\rangle)$ is quasi-convex in $X$.)
\end{corollary}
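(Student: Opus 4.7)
The plan is to combine Corollary \ref{CAT(0)} (applied to finite CAT(0) geodesics rather than rays) with the classical convexity of special subgroups of $(W,S)$ in $\Lambda(W,S)$. Fix $u,v\in\langle A\rangle$ and let $\alpha$ be the CAT(0) geodesic in $X$ from $P_x(u)$ to $P_x(v)$. The argument written out for Corollary \ref{CAT(0)} already produces constants $\lambda,\epsilon,K_1,B$ depending only on $(W,S)$, on $X$, and on the $W$-action (not on $\alpha$) such that $\alpha$ is $K_1$-tracked by $P_x(\beta)$ for some $\Lambda$-quasi-geodesic $\beta$ from $u$ to $v$ with parameters $(\lambda,\epsilon)$ and bracket number at most $B$; indeed that proof explicitly phrases its conclusion as ``any (finite) CAT(0) geodesic $\alpha$ is $K$-tracked by a Cayley $(\lambda,\epsilon)$-quasi-geodesic with bracket number $\leq B$''.

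Applying Theorem \ref{trackqg} to $\beta$ then yields a $\Lambda$-geodesic $\gamma$ from $u$ to $v$ and a constant $K_2$, depending only on $\Lambda,\lambda,\epsilon,B$ (hence not on the choice of $u,v$), such that $\gamma$ $K_2$-tracks $\beta$. Pushing forward by the $W$-equivariant quasi-isometry $P_x$, every point of $\alpha$ lies within a uniform constant $K$ of $P_x(\gamma)$. It therefore suffices to know that $\gamma$ itself is contained in $\langle A\rangle$, for then $P_x(\gamma)\subset P_x(\langle A\rangle)$ and quasi-convexity with constant $K$ follows.

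The last ingredient is the standard Coxeter-theoretic fact that every reduced expression in $(W,S)$ for an element of $\langle A\rangle$ uses only letters of $A$ (a consequence of Tits' solution to the word problem, see Davis \cite{Davis}). Equivalently, any $\Lambda$-geodesic whose endpoints lie in $\langle A\rangle$ is itself supported in $\langle A\rangle$, so $\gamma\subset \langle A\rangle$ and the preceding paragraph delivers the result. The only real obstacle I anticipate is the bookkeeping required to verify that the constants in Corollary \ref{CAT(0)} and in Theorem \ref{trackqg} are indeed uniform in $u,v$; once that is checked, the argument is little more than an appeal to the convexity of special subgroups, which I would cite rather than reprove.
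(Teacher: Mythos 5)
Your proposal is correct and takes essentially the same route as the paper: track the CAT(0) geodesic by a $\Lambda_x$-geodesic via Corollary \ref{CAT(0)} (whose proof already covers finite geodesic segments with uniform constants), then invoke the standard convexity of special subgroups so that the Cayley geodesic between two points of $\langle A\rangle$ lies entirely in $\langle A\rangle$. You simply unpack the intermediate quasi-geodesic and the appeal to Theorem \ref{trackqg} that the paper compresses into a direct citation of Corollary \ref{CAT(0)}; the uniformity of constants you worry about is already built into that corollary.
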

\begin{proof}
Let $K$ be the tracking constant from corollary \ref{CAT(0)}. Suppose $a_1, a_2\in A$ and $\alpha$ is a CAT(0) geodesic in $X$ from $P_x(a_1)$ to $P_x(a_2)$. Let $\beta$ be a $\Lambda_x$, edge path geodesic which $K$-tracks $\alpha$. I.e. there is a  $\Lambda(W, S)$ geodesic $\beta'$, from $a_1$ to $a_2$ such that $P_x(\beta')=\beta$. Since $a_i\in A$, the edge labels of $\beta'$ are all in $A$. This means all vertices of $\beta'$ are in $\langle A\rangle$,  and so the image of $\alpha$ is within $K$ of $P_x(\langle A\rangle)$. 
\end{proof}

The next result says that elements of infinite order in a Coxeter group are tracked by geodesics in the standard Cayley graph. 

\begin{corollary} Suppose $(W,S)$ is a finitely generated Coxeter system and $g\in W$ is an element of infinite order. Then in the Cayley graph $\Lambda(W,S)$ the elements $\{\ldots, g^{-2}, g^{-1}, 1, g, g^2,\ldots \}$ are tracked by a Cayley graph geodesic.
\end{corollary}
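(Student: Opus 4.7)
The plan is to construct a bi-infinite quasi-geodesic edge path through $\{g^n\}$ in $\Lambda$, show it has bounded bracket number, and then apply Corollary \ref{localfinite}. First I would realize $W$ as acting geometrically on its Davis complex $X$, a proper CAT(0) space. Because the action is properly discontinuous and cocompact, every element of $W$ acts semisimply on $X$; an elliptic element of a Coxeter group (one fixing a point of $X$) has finite order, so an infinite-order $g$ must be hyperbolic, with positive translation length $\tau$ and an axis $L\subset X$ on which $g$ translates by $\tau$. Fix a basepoint $x\in X$ not fixed by any nontrivial element of $W$, and let $P_x:\Lambda\to\Lambda_x$ be the associated $W$-equivariant quasi-isometry.

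Next I would build $\alpha$ by concatenating, for each $n\in\mathbb{Z}$, a $\Lambda$-geodesic $\gamma_n$ from $g^n$ to $g^{n+1}$. Since left multiplication by $g$ sends $\gamma_n$ isometrically onto $\gamma_{n+1}$, the path $\alpha$ is invariant under the left action of $g$ up to the index shift by $|g|_\Lambda$. For the quasi-geodesic property, the point $g^n x$ lies within $d(x,L)$ of the axis point $g^n\pi_L(x)\in L$, and these axis points are spaced by $\tau$, so $d_X(g^m x, g^n x)\geq \tau|m-n|-2d(x,L)$; transporting this linear lower bound back through the quasi-isometry $P_x$ and using that each $|\gamma_n|=|g|_\Lambda$ is constant shows that $\alpha$ is a $(\lambda,\epsilon)$-quasi-geodesic for suitable $\lambda,\epsilon$.

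The crucial step is bounding $B(\alpha)$, and this is where I expect the main obstacle to lie. By the $g$-shift invariance of $\alpha$ it suffices to bound $B(v,\alpha)$ for the finitely many vertices $v$ lying on $\gamma_0$, and the strategy is to mimic the quadrilateral CAT(0) argument used in Corollary \ref{CAT(0)}. Because each $g^n x$ is within $d(x,L)$ of $L$ and each $P_x(\gamma_n)$ has uniformly bounded length, $P_x(\alpha)$ has uniformly bounded Hausdorff distance $D$ from $L$. If a wall $Q$ brackets $v$ with witness edges $e$ (preceding $v$) and $d$ (following $v$) both in $Q$, the midpoints $e'_X,d'_X$ of $P_x(e),P_x(d)$ in $X$ are fixed by the reflection $r_Q$ and lie within $D+\tfrac12$ of $L$; the unique $X$-geodesic $[e'_X,d'_X]$ is then $r_Q$-fixed and fellow-travels the subsegment of $L$ between the closest-point projections of $e'_X$ and $d'_X$. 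A CAT(0) quadrilateral estimate applied to $P_x(v)$, which lies within $D$ of $L$ between these two projections, then forces $P_x(v)$ to lie within a uniform constant $D'$ of a point fixed by $r_Q$. Proper discontinuity of the $W$-action on $X$ bounds the number of such reflections, and so $B(v,\alpha)$ is bounded independently of $v$.

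With $B(\alpha)$ bounded and $\alpha$ a bi-infinite quasi-geodesic, Corollary \ref{localfinite} supplies a bi-infinite $\Lambda$-geodesic $\beta$ that $K'$-tracks $\alpha$; in particular $\beta$ tracks $\{g^n:n\in\mathbb{Z}\}$, which is what is required. The delicate point to verify in the bracket-number step is that the along-$\alpha$ ordering of $e$, $v$, $d$ matches the along-$L$ ordering of the projections of $e'_X$, $P_x(v)$, $d'_X$; this should follow from the fellow-traveling of $P_x(\alpha)$ with $L$ together with the fact that for large $|m-n|$ the orbit points $g^m x$ and $g^n x$ project to widely separated points of $L$.
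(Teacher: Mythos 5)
Your proposal is correct and follows essentially the same route as the paper: both pass to a geometric action of $W$ on a proper CAT(0) space (the paper says ``any'' such $X$, you use the Davis complex), extract an axis for the hyperbolic element $g$, observe that the iterated $\Lambda$-geodesic for $g$ stays a bounded distance from that axis, then invoke the bracket-number bound from the proof of Corollary~\ref{CAT(0)} (as licensed by the Remark following it, which drops cocompactness in favor of the CAT(0) geodesic staying near $\Lambda_x$) and finish with Corollary~\ref{localfinite}. You have simply spelled out the quasi-geodesic estimate and the quadrilateral argument that the paper delegates by reference.
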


\begin{proof} By G. Moussong \cite{Moussong}, all finitely generated Coxeter groups are CAT(0). Let $X$ be any CAT(0) space such that $W$ acts geometrically on $X$. The min set of $g$ contains a geodesic line $l$ that is invariant under the action of $g$. Let $x$ be any point in $X$ and $\Lambda_x$ the copy of $\Lambda(W,S)$ in $X$ at $x$. Let $\alpha$ be an $S$-geodesic for $g$. Observe that the edge path line  $l_g$ in $\Lambda _x$ determined by positive and negative iterates of $\alpha$ at $x$ is a bounded distance from $l$. The proof of corollary \ref{CAT(0)} shows that $l_g$  is a quasi-geodesic with bounded bracket number and so by corollary \ref{localfinite} is tracked by a Cayley graph geodesic. 
\end{proof} 

One of the fundamental asymptotic results for word hyperbolic groups is that 1-ended word hyperbolic groups have locally connected boundary. This result follows from a long program of results by several authors, notably B. Bowditch, and concluded by G. Swarup \cite{Swarup}. To give a feeling for the reach of our results, we outline an elementary proof of this fact for Coxeter groups. 
\begin{corollary}
If $W$ is a 1-ended word hyperbolic Coxeter group then the boundary of $W$ is locally connected.
\end{corollary}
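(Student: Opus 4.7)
The plan is to exploit the classical characterization, due to Bowditch, that the boundary of a $1$-ended word hyperbolic group is locally connected iff it contains no global cut point, and then to rule out cut points in $\partial W$ using the Coxeter wall structure together with the tracking theorems already proved. Suppose for contradiction that $\xi \in \partial W$ is a global cut point. Standard hyperbolic-group theory produces a two-ended subgroup $H \leq W$ fixing $\xi$, and in a Coxeter system any such $H$ contains either an element of infinite order or an infinite dihedral pair of reflections. By the previous corollary on infinite-order elements together with Corollary \ref{localfinite}, one obtains a bi-infinite $\Lambda$-geodesic line $\ell$ that is tracked by an $H$-orbit, with endpoints $\xi$ and some $\xi' \in \partial W$.

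Next I would exploit the walls crossed by $\ell$. Translating a crossed wall $Q$ by powers of a generator of $H$ produces a sequence $\{Q_n\}$ of walls whose crossing points on $\ell$ march toward $\xi$. Each wall $Q_n$ has setwise stabilizer containing (a conjugate of) a special subgroup of $W$, which by the preceding special-subgroup quasi-convexity corollary is quasi-convex in $W$; hence $\bar Q_n$ is quasi-convex and has a well-defined limit set $\partial Q_n \subset \partial W$. For a $1$-ended hyperbolic $W$ this limit set is a nondegenerate continuum, and it separates $\partial W$ into the limit sets of the two sides of $\bar Q_n$ in $\Lambda$. As $n$ grows, the continua $\partial Q_n$ accumulate on $\xi$ but, for large $n$, avoid $\xi$ and meet both components of $\partial W \setminus \{\xi\}$, producing a connected subset of $\partial W \setminus \{\xi\}$ that straddles the putative cut --- contradiction.

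The main obstacle will be verifying rigorously that each $\partial Q_n$ really is a nondegenerate continuum straddling the cut. This step uses the Parallel Wall theorem to control how walls concentrate near $\xi$, the $1$-endedness of $W$ to guarantee wall stabilizers have rich enough limit sets, and the quasi-convexity corollary to transport Cayley-graph information into $\partial W$. An alternative route, possibly cleaner given the tools of this paper, is to bypass cut-point machinery altogether and build connected neighborhoods of $\xi$ directly: with $\gamma$ a $\Lambda$-geodesic ray representing $\xi$ and $U_n$ the usual shadow neighborhood of $\xi$ indexed by $\gamma(n)$, one uses reflections across walls near $\gamma(n)$ together with the bounded-bracket tracking furnished by Corollary \ref{localfinite} to interpolate between any two geodesic rays in $U_n$ by a continuum of rays that all remain in $U_n$. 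Either approach hinges on translating wall-theoretic data in $\Lambda$ into topological data in $\partial W$, which is precisely what the tracking theorem and its corollaries are designed to do.
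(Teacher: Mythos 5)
Your first route takes a genuinely different approach from the paper, but it has a fatal gap at its core. You reduce to ruling out a global cut point via Bowditch's characterization, and then assert that ``standard hyperbolic-group theory produces a two-ended subgroup $H \le W$ fixing $\xi$.'' That is precisely what is \emph{not} standard: the hard part of the Bowditch--Swarup program is ruling out so-called exotic cut points, whose stabilizers need not be two-ended. If you were allowed to assume that every cut point is fixed by a two-ended subgroup, you would already essentially have the no-cut-point theorem; invoking it as a given begs the question. Downstream of that, the claim that the setwise stabilizer of a wall is (a conjugate of) a special subgroup is also not justified for general Coxeter systems --- the stabilizer of a wall is the centralizer of a reflection, and while this can be described, it is not automatically special --- and the assertions that $\partial Q_n$ is a nondegenerate continuum and that it separates $\partial W$ would each need their own arguments (a wall can even have only finitely many edges).

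The paper does not use cut-point machinery at all. It builds a ``filter'': starting from two Cayley geodesic rays $r'$, $s'$ tracking nearby CAT(0) rays and sharing a long initial segment ending at $y$, it observes that the set $B$ of generators labelling edges at $y$ pointing back toward $\ast$ generates a \emph{finite} subgroup, and that $1$-endedness of $W$ (via the visual decomposition result of [MT]) forces $B$ not to separate the presentation diagram $\Gamma(W,S)$. A path in $\Gamma \setminus B$ from the label of $r'$ to the label of $s'$ then yields a chain of finite dihedral loops at $y$ whose ``far'' arcs extend geodesically past $y$. Iterating produces a $1$-ended subgraph $F \subset \Lambda$ all of whose vertices lie on geodesics from $\ast$ through $y$; properness of $P_x$ gives a connected limit set $L \ni r,s$, and $\delta$-hyperbolicity (fellow-travelling of Cayley geodesics) shows $L$ is small. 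This is a direct construction of small connected neighborhoods in $\partial W$, and it hinges on two ingredients your sketch never touches: the interaction between descent sets at a vertex and $1$-endedness of $\Gamma$, and the dihedral-loop mechanism for interpolating between two outgoing geodesic directions. Your second, ``alternative route'' points in this direction but leaves precisely these steps unstated, so as written it is not a proof.
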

\begin{proof}
We use an elementary form of a construction of a ``filter" in \cite{MRT} (where a partial classification of right angled Coxeter groups with locally connected boundaries is produced). Suppose $W$ acts geometrically on the CAT(0) space $X$, with base point $x$. Let $\Lambda_x$ be the copy of the Cayley graph of $(W,S)$ at $x$ in $X$ with proper $W$-equivariant map $P_x:\Lambda(W,S)\to \Lambda_x$. Suppose $r$ and $s$ are ``close" geodesic rays in $X$, with $r(0)=s(0)=x$. Choose $\Lambda$ (edge path) geodesics $r'$ and $s'$ at $\ast$ (the identity vertex of $\Lambda(W,S)$), such that $P_x(r')$ and $P_x(s')$ $K$-track $r$ and $s$ respectively. Since $r$ and $s$ are close in $\partial X$, we may assume that $r'$ and $s'$ have long initial segments with ``close" terminal points. For simplicity we assume these initial segments agree. If $y$ is the last vertex of this common initial segment, say the edge of $r'$ following $y$ has label $a_1$ and the edge of $s'$ following $y$ has lablel $b_1$. 
The presentation diagram $\Gamma(W,S)$ of $(W,S)$ has vertex set $S$ and an edge labeled $m(i,j)$ between distinct vertices $s_i, s_j$ if $m(i,j)\not = \infty$. Since $W$ is 1-ended no subset $A$ of $S$ with $\langle A\rangle$ a finite group separates $\Gamma$ (see corollary 16 of \cite{MT}).
 The set $B$ of $S$-elements that label edges at $y$ with end points closer to $\ast $ than $y$ is to $\ast$ generates a finite subgroup of $W$. 
The set of vertices of $\Gamma$ corresponding to $B$ does not separate $\Gamma$ and $B$ does not contain $a_1$ or $b_1$.  Hence there is an edge path in $\Gamma$ from $a_1$ to $b_1$ avoiding $B$. Let the consecutive vertices of this path be $a_1=v_1,v_2,\ldots , v_n=b_1$. 
If $q(i,i+1)$ is the (finite) order of $v_iv_{i+1}$ then the relation $(v_i,v_{i+1})^{q(i,i+1)}$ determines a loop at $y\in \Lambda$ such that the two half loops at $y$ making up this loop extend the Cayley geodesic from $\ast$ to $y$. 
Consider the subgraph $F_1$ of $\Lambda$ determined by the edge paths $r'$, $s'$ and the edge loops for each $v_iv_{i+1}$. Each $v_i$ determines an edge of $F_1$ (with label $v_i$) beginning at $y$. At the end point of this edge there are two edges of $F_1$ that extend a Cayley geodesic from $\ast$ to $y$. 
Build a set of loops as with $a_1$ 
and $b_1$ for each of these pairs of edges. Then $F_2$ is $F_1$ union all new loops. Continuing we build a 1-ended subgraph $F=\cup_{i=1}^{\infty}F_i$ of $\Lambda$ such that for each vertex $v$ of $F$, not on the common overlap of $r'$ and $s'$, there is a Cayley geodesic from $\ast$ to $v$ in $F$ which passes through $y$. We claim that $L$, the limit set 
of $P_x(F)$ is a ``small" connected set containing $r$ and $s$ (and so $\partial X$ is locally connected). Certainly, $r$ and $s$ are in $L$. Since $F$ is 1-ended and $P_x$ is proper, $L$ is connected. If $v$ is a vertex of $F$, then there is a Cayley geodesic $\alpha_v$ from $
\ast$ to $v$ (which passes through $y$ for all but finitely many $v$). If $z\in L$ then let $z_1,z_2,\ldots $ be a sequence of vertices of $F$ such that $P_x(z_i)$ converges to $z$. 
The CAT(0) geodesic from $x$ to $P_x(z_i)$ is $K$- tracked by a Cayley geodesic $\beta_i$ in $\Lambda_x$. As $W$ is word hyperbolic the Cayley geodesics $P_x(\alpha_{v_i})$ and $\beta_i$ (with the same end points)
must $\delta$-fellow travel (for a fixed constant $\delta$). In particular each $\beta_i$ must pass ``close" to $P_x(y)$ and so $z$ is close to both $r$ and $s$ in $\partial X\equiv \partial W$.
\end{proof}

\enddocument